\documentclass[12pt, reqno]{amsart}
\usepackage{amsthm}
\usepackage{amsfonts, ulem}
\usepackage{amsmath}
\usepackage{amssymb}
\usepackage{mathrsfs}
\usepackage{bbm}
\usepackage{tikz, hhline}
\usepackage{multicol}
\usepackage{tabularx, multirow}
\usepackage[utf8]{inputenc}
\usetikzlibrary{matrix}
\usepackage{enumitem}
\usepackage{color}
\usepackage[all]{xy}

\everymath{\displaystyle}
\CompileMatrices

\CompileMatrices

\newtheorem{thm}{Theorem}[section]
\newtheorem{lemma}[thm]{Lemma}
\newtheorem{prop}[thm]{Proposition}

\theoremstyle{definition}
\newtheorem{defn}[thm]{Definition}

\newtheorem{const}[thm]{Construction}

\def\R{{\mathbb R}}

\def\C{{\mathbb C}}
\def\H{{\mathbb H}}

\def\T{{\mathbb T}}
\def\Z{{\mathbb Z}}

\def\Up{{\Upsilon}}

\def\1{{1}}

\def\su{_{\scriptscriptstyle U}}

\def\pr{^{\scriptscriptstyle \R}}
\def\po{^{\scriptscriptstyle O}}
\def\pu{^{\scriptscriptstyle U}}

\def\sc{_{\scriptscriptstyle \C}}
\def\crr{^{\scriptscriptstyle {\it CR}}}

\def\T{ ^{\rm{Tr}}}
\def\sT{ ^{\sharp \otimes {\rm{Tr}}}}
\def\Ttau{ ^{\rm{Tr} \otimes \tau}}
\def\sTtau{^{\sharp \otimes {\rm{Tr}} \otimes \tau}}
\def\stau{^{\sharp \otimes \tau}}

\def\diag{\rm{diag}}

\def\ev{\rm{ev}}

\newcommand{\sm}[4]
	{ \left( \begin{smallmatrix} {#1} & {#2} \\ {#3} & {#4} \end{smallmatrix} \right)  }

\def\id{\rm {id} \,}

\begin{document}

~

\vspace{-1cm}

\title{The Real $K$-Theory of the Sphere with an Arbitrary Involution} 
\author{Jeffrey L. Boersema}
\begin{abstract}
We complete the investigation begun in a previous paper to find unitary representations of the non-trivial real $K$-theory elements for the sphere $S^d$ with an involution. Here we consider all involutions except the antipodal involutions. We write down explicit unitaries representing the generators in all cases for $d \leq 3$, and for $d > 0$ we describe a recipe for generating such unitaries.
\end{abstract}

\maketitle



\section{Introduction}
On the unit sphere $S^d \subset \R^{d+1}$,
there exists a family of nonequivalent involutions $\tau =  \tau^{a,b}$ (where $a + b = d+1$) defined by
$$(x_1, \dots, x_a, x_{a+1} \dots, x_{a+b}) \mapsto (x_1, \dots, x_a, -x_{a+1}, \dots, -x_{a+b}) \; $$
for $x = (x_1, \dots, x_{d+1}) \in S^d$. We are interested in the real $K$-theory of the space-with-involution $(S^d, \tau)$, which is to say the $K$-theory $KO_i(A)$ of the real $C \sp *$-algebra
$$A = C\pr(S^d, \tau) = \{ f \in C(S^d) \mid f(\tau(x)) = \overline{f(x)} \} \subset C(S^d) \;. $$
The $K$-theory of this real $C \sp *$-algebra is naturally isomorphic to the topological ``Real $K$-Theory" $KR^i(S^d, \tau)$ (as in Atiyah \cite{AtiyahR}), 
defined in terms of vector bundles ($\xi, \sigma)$ over $S^d$ which have a conjugate linear involution  $\sigma$ intertwining with $\tau$.
See Section~16 of \cite{BoersemaSchochet} for a proof of this isomorphism.

If $a > 0$ then the involution $\tau^{a,b}$ on $S^d$ has fixed points, and by performing a sterographic projection on one of the fixed points of $S^d$ we find an isomorphism
\begin{equation} \label{AbstractKTheory}
\begin{aligned}
KO_*(C\pr(S^d, \tau^{a,b})) 
&= KO_*(\R) \oplus KO_*(C\pr_0(\R^d, \tau^{a-1,b})) \\
&= KO_*(\R) \oplus \Sigma^{a-b-1} KO_*(\R) 
	\; . 
\end{aligned}
 \end{equation}
 We tend to disregard the first summand of this decomposition, which is associated with the real-valued constant functions on $S^d$. The second summand is isomorphic to the reduced $K$-theory, identified as the kernel of a point-evaluation map
 $$\widetilde{KO}_*(C\pr(S^d, \tau^{a,b})) = \ker \left[ \ev_* \colon {KO}_*(C\pr(S^d, \tau^{a,b}) )
 	\rightarrow KO_*( \R) \right]$$
where $\ev \colon C\pr(S^d, \tau^{a,b}) \rightarrow \R$ is evaluation at any point $p \in S^d$ fixed by the involution $\tau^{a,b}$.
 The reduced $K$-theory contains the information of interest:
\begin{equation*}  
\widetilde{KO}_*(C\pr(S^d,  \tau^{a,b})) \cong KO_*(S^{a-b-1} \R) \cong \Sigma^{a-b-1} KO_*(\R) \; \end{equation*}
where the algebra $S^{a-b-1} \R$ is defined in terms of the suspension or desuspension operations 
\begin{align*}
SA &= C\pr(\R, \tau^{1,0}) \otimes A \\
S^{-1} A &= C\pr(\R, \tau^{0,1}) \otimes A
\end{align*}
have the effect of shifting the real $K$-theory up or down (see Proposition~1.20 of \cite{Boer2002}).

Recall that $KO_*(\R)$ is a period-8 graded ring whose groups are given by
 $$KO_i(\R) =  (\Z, \Z_2, \Z_2, 0, \Z, 0, 0, 0)$$
 in degrees $0 \leq i \leq 7$. Thus from Equation~(\ref{AbstractKTheory}) we have
 $$\widetilde{KO}_i(C\pr(S^d, \tau^{a,b})) = (\Z, \Z_2, \Z_2, 0, \Z, 0, 0, 0)$$
 in degrees $b-a+1 \leq i \leq b-a+8$. We refer to the {\it  key generator} as the generator 
 $$g_{a,b} \in \widetilde{KO}_{b-a+1}(C\pr(S^d, \tau^{a,b})) \cong \Z$$ 
 which generates
 $\widetilde{KO}_*(C\pr(S^d,  \tau^{a,b}))$ as a $KO_*(\R)$-module and furthermore generates the united $K$-theory
 $\widetilde{K}\crr(C\pr(S^d,  \tau^{a,b}))$ as a free $\mathcal{CR}$-module. See Section~\ref{prelim} below for a summary review of united $K$-theory or the references listed there for a complete introduction.  
We note that apart from the exceptional cases $d = 0,1$, the same formula of Equation~(\ref{AbstractKTheory}) holds also for the antipodal involution, even though the stereographic projection argument does not carry through 
(see Theorem~2.1 of \cite{boersema-spheres1}).

Equation~(\ref{AbstractKTheory}) gives us the abstract group structure of $KO_i(C\pr(S^d, \tau^{a,b}))$ for all $i$ and all $a,b$. This paper is concerned with identifying explicit descriptions of the non-trivial $K$-theory elements, using the unitary picture of real $K$-theory developed in \cite{BL} and \cite{Boer2020}.
 Some low-dimensional cases (for $d = 1,2$) of this was accomplished in Section~4 of \cite{Boer2020}; but in \cite{boersema-spheres1} and in this paper, we take a systematic approach to finding unitary elements to represent all non-trivial $K$-theory classes in all cases. We find explicit formulas for this, for all spheres $(S^d, \tau^{a,b})$ with $d \leq 3$ and $a > 0$, and with all possible involutions. More generally, for each $a,b$ with $a > 0$, we will describe a process to find a formula for a unitary $u$ that represents the key generator $g_{a,b}$. The cases where $a = 0$ was fully addressed in \cite{boersema-spheres1}, so together these two papers address all possible spheres with involution. 
  
This paper is also closely connected with the work of Schulz-Baldes and Tom Stoiber in \cite{SBS}, which addresses the case of complex $K$-theory for the sphere $S^d$. There, in Proposition~1, the authors display unitaries that represent the non-trivial elements $g$ in $\widetilde{K}_i(C(S^d))$. In particular,
\begin{align*}
&g \in \widetilde{K}_0(C(S^d)) = \Z &\text{ for $d$ even} \\
&g \in \widetilde{K}_1(C(S^d)) = {K}_1(C(S^d)) = \Z &\text{ for $d$ odd} 
\end{align*}
The formulas that we find for $g_{a,b}$ will be derived from the formulas of \cite{SBS} in the complex case, but extra care must be taken throughout to make sure that they satisfy the required symmetries for the real $K$-theory elements and taking into account the given involution on $S^d$, as dictated by the unitary picture of real $K$-theory.

In the unitary picture of real $K$-theory, elements of real $K$-theory are represented by unitary matrices over the complexification of the real $C \sp *$-algebra in question, that satisfy certain symmetry relations. This picture is summarized in Section~\ref{prelim} below.

In the low dimensional cases $d \leq 3$, we present these formulas for generators of all the non-zero $K$-theory groups, presenting these formulas as concretely as possible, so that they can be used off-the-shelf. This is accomplished in Section~\ref{section-lowdim}. We treat the general case, for arbitrary $d$, in Section~\ref{section-highdim}. We produce formulas of unitaries to represent the key generators $g_{a,b}$. All other non-trivial elements of 
$\widetilde{KO}_{i}(C\pr(S^d, \tau^{a,b}))$ can be obtained from $g_{a,b}$ using the internal $K$-theory operations, arising from its structure as a module over $K_*(\R)$. Accordingly, a dedicated reader can combine the formulas for unitaries representing $g_{a,b}$ in this paper with the concrete description of these operations in \cite{Boer2020}, 
to find a unitary representative for any non-trivial element in $\widetilde{KO}_{i}(C\pr(S^d, \tau^{a,b}))$.

Before describing the most general situation in Section~\ref{section-highdim}, we address two special families of involutions where we can present more specific results. The special families of examples are for the identity involution $\tau^{a, 0}$ and for the involution $\tau^{1, b}$. For these families of examples, we have
\begin{align*}
	\widetilde{KO}_i(C\pr(S^d, \tau^{a,0})) &\cong KO_i(C\pr_0(\R^d, \tau^{a-1,0})) = KO_i(S^{a-1} \R) \; .\\
	\widetilde{KO}_i(C\pr(S^d, \tau^{1,b})) &\cong KO_i(C\pr_0(\R^d, \tau^{0,b})) = KO_i(S^{-b} \R) \; .\\
\end{align*}

We show how to find unitaries for $g_{a,b}$ in these special cases.
Our intention is that readers who are interested in one of these special families of examples will find the description more approachable, and also that the presentation in the special cases will better illuminate the general situation which will follow.

Formulas such as the ones we are developing in this paper are topological markers in physical models such as in \cite{bradlyn},  \cite{Li...2025}, and \cite{Loring2015}. These physical models tend to involve a space with involution $(X, \tau)$ where the involution might represent time-reversal symmetry or particle-hole symmetry (or a combination of both) and the space $X$ might be homeomorphic to $\R^n$, a sphere $S^d$, a torus (a product of spheres), or another space.

These methods have been used to study topological insulators and more recently these methods have been proposed to classify topology in photonic systems as in \cite{CL-2022}, \cite{CL2024}, \cite{CLSB-2024}, \cite{DLC-2023}.
We wish to be able to address physical models in a variety of possible dimensions, with a variety of symmetry structures, which may or may not contain time-reversal symmetry, and which may or may not contain particle-hole symmetry. Together with the previous paper \cite{boersema-spheres1}, this article extends the range of such models that can be addressed to include all spheres with involution. The formulas that we develop in this paper should be seen as $K$-theory generalizations of the index formulas described in \cite{HL}, particularly Section 5.

\subsection{Acknowledgements}

This work was supported in part by the Laboratory Directed Research and Development program at Sandia National Laboratories.
Also, this work was performed, in part, at the Center for Integrated Nanotechnologies, an Office of Science User Facility operated for the U.S. Department of Energy (DOE) Office of Science by Los Alamos National Laboratory (Contract 89233218CNA000001) and Sandia National Laboratories (Contract DE-NA-0003525).

\section{Preliminaries} \label{prelim}

Most of the key preliminary material that we need can be found summarized in more detail in the Introduction of \cite{boersema-spheres1} to which we refer the reader.

A recipe for representing the non-trivial elements of the complex $K$-theory $\widetilde{K}_i(C(S^d))$ for all $d \geq 1$ in terms of self-adjoint unitaries (for $i = 0$ and $d$ even) and unitaries (for $i = 1$ and $d$ odd) can be found in the paper by Schulz-Baldes and Stoiber 
(see Proposition~1 and the beginning of Section~2 of \cite{SBS}).
 Briefly, for a positive odd integer $d$, first write down a set of Clifford generators $\Gamma_{1, d}, \dots \Gamma_{d,d}$, which are matrices in $M_{n}(\C)$ where $n = 2^{{(d-1)}/2}$ that satisfy 
 $$\Gamma_{i, d} \Gamma_{j,d} +  \Gamma_{j, d} \Gamma_{i,d}  = 2 \delta_{i,j} \; .$$
 We obtain a self-adjoint unitary $Q_{d-1} \in C(S^{d-1}) \otimes  M_n(\C)$ and a unitary $U_d \in C(S^{d}) \otimes M_n(\C)$ by
\begin{align} \label{QandU}
	Q_{d-1}(x) &= \sum_{i=1}^{d} x_i \Gamma_{i,d} \\
	 \text{and} \quad 
	U_d(x) &= \sum_{i=1}^d x_i \Gamma_{i,d}  + x_{d+1} i \,  I   \; .  \nonumber
\end{align}
Then the authors of \cite{SBS} prove that the elements
$$[Q_{d-1}] \in \widetilde{K}_0(C(S^{d-1})) = \Z \quad \text{and} \quad
[U_d] \in \widetilde{K}_1(C(S^d) ) = {K}_1(C(S^d) ) = \Z$$ represent generators of the respective $K$-theory groups.
In this expression, complex $K_0$ elements are represented using self-adjoint unitaries and complex $K_1$ elements are represented using arbitrary unitaries in matrix algebras over the complex $C \sp *$-algebra $C(S^d)$, as shown in the first lines of Table~\ref{unitaryTable}.

In the real case, we also represent $KO_i$ elements using either self-adjoint unitaries or arbitrary unities in matrices algebras over the complex $C \sp *$-algebra $C(S^d)$ that also satisfy an extra symmetry related to the anti-multiplicative involution on $C(S^d)$ (coming from the involution on $S^d$). The symmetry that must be satisfied varies according to $i$ as laid out in the unitary picture of real $K$-theory developed in \cite{BL} and summarized here in Table~\ref{unitaryTable}.

\newcommand\TT{\rule{0pt}{2.6ex}} 
\newcommand\BB{\rule[-1.2ex]{0pt}{0pt}} 
\begin{table}[h] 
\caption{$KO_*(A)$ and $KU_*(A)$ via unitaries for $A$ unital.} \label{unitaryTable}
\begin{center}
\begin{tabular}{c|c|c|c|c|} 
\hhline{~----}
& K-group \TT \BB & $n_i$ & $\mathscr{S}_i$ & $I^{(i)}$ \\ \hhline{=====}
\multicolumn{1}{|c|}{\multirow{2}{*}{complex}}  & $KU_0(A)$ \TT \BB & 2 & $u = u^*$ & $\sm{\1}{0}{0}{-\1}$ \\ \hhline{~----}
\multicolumn{1}{|c|}{} & $KU_1(A)$ \TT \BB & 1 & -- & $\1$ \\ \hhline{=====}
\multicolumn{1}{|c|}{\multirow{8}{*}{real}}  & $KO_{-1}(A)$ \TT \BB & 1 & $u\Ttau = u $ & $\1$ \\ 
\multicolumn{1}{|c|}{} & \text{or} & { 2}  & {  $u\sTtau = -u$} & { $\sm{0}{\1}{-\1}{0}$ }\\
 \hhline{~----}
\multicolumn{1}{|c|}{} & $KO_0(A)$ \TT \BB & 2 &  $u = u^*$, $u\Ttau = u $ & $\sm{\1}{0}{0}{-\1}$  \\ \hhline{~----}
\multicolumn{1}{|c|}{} & $KO_1(A)$ \TT \BB & 1 &  $u\Ttau = u^* $ & $\1$ \\ \hhline{~----}
\multicolumn{1}{|c|}{} & $KO_2(A)$ \TT \BB & 2 &  $u = u^*$, $u\Ttau = -u$ & $\sm{0}{i \cdot \1}{-i \cdot \1}{0}$ \\\hhline{~----}
\multicolumn{1}{|c|}{} & $KO_{3}(A)$ \TT \BB & 2 & $u\sTtau = u$ & $ \1_2 $ \\
\multicolumn{1}{|c|}{} & \text{or} & 2 &  $u\Ttau = -u$ & { $\sm{0}{\1}{-\1}{0}$ }\\
 \hhline{~----}
\multicolumn{1}{|c|}{} & $KO_4(A)$ \TT \BB & 4 &  $u = u^*$, $u\sTtau = u$ & ${\diag}(\1_2,-\1_2)$ \\ \hhline{~----}
\multicolumn{1}{|c|}{} & $KO_5(A)$ \TT \BB & 2 &  $u\sTtau = u^*$ & $\1_2$ \\ \hhline{~----} 
\multicolumn{1}{|c|}{} & $KO_6(A)$ \TT \BB & 2 &  $u = u^*$, $u\sTtau = -u$ & $\sm{0}{i \cdot \1}{-i \cdot \1}{0}$ \\ \hhline{=====}
\end{tabular}
\end{center}
\end{table}

To understand this table, recall that for any real $C \sp *$-algebra $A$ there is a corresponding antimultiplicative involution $\tau$ on $A\sc = A \otimes \C$, given by
$\tau \colon a + ib \mapsto a^* + ib^*$ where $a,b \in A$. The real $C \sp *$-algebra $A$ can be recovered from the pair $(A\sc, \tau)$ as the elements in $A\sc$ that satisfy $a^\tau = a^*$. The involution $\tau$ extends to involutions $\rm{Tr} \otimes \tau$ on $M_n(\C) \otimes A\sc = M_n(A\sc)$ and involutions $\sharp \otimes \rm{Tr} \otimes \tau$ on $M_2(\C) \otimes M_n(\C) \otimes A\sc = M_{2n}(A\sc)$.
The involution $\sharp$ on $M_2(\C)$ is the involution associated with the algebra of quaternions $\H \subset M_2(\C)$ and is given by
$$\begin{bmatrix} a & b \\ c & d \end{bmatrix}^\sharp = \begin{bmatrix} d & -b \\ -c & a \end{bmatrix} \; .$$
When we are making explicit calculations with matrices, we always choose an isomorphism $M_2(\C) \otimes M_n(\C) \cong M_{2n}(\C)$ such that
	$$a^{\sharp \otimes \rm{Tr}}
		= \begin{bmatrix} a_{1,1} & a_{1,2} & \cdots & a_{1,n} \\ a_{2, 1} & a_{2,2} & \cdots & a_{2,n} \\
			\vdots & \vdots & \ddots & \vdots \\ a_{n,1} & a_{n,2} & \cdots & a_{n,n}  \end{bmatrix}^{\sharp \otimes \rm{Tr}} 
	 	= 	\begin{bmatrix} a_{1,1}^{\sharp} & a_{2,1}^\sharp & \cdots & a_{n,1}^\sharp \\
			 \\ a_{1,2}^\sharp & a_{2,2}^\sharp & \cdots & a_{n,2}^\sharp \\
			\vdots & \vdots & \ddots & \vdots \\ a_{1,n}^\sharp & a_{2,n}^\sharp & \cdots & a_{n,n}^\sharp \end{bmatrix}
		$$
where each $a_{i,j} \in M_2(\C)$.
		
For a unital real $C \sp *$-algebra, the elements of $KO_i(A)$ are represented by unitaries in $M_{n}(A\sc)$ that satisfy the relation $\mathscr{S}_i$ given in the table (and where $n$ is a multiple of $n_i$). The designated unitary element $I^{(i)}$ in the table represents a neutral element, which satisfies $[ I^{(i)} ] = 0$ in $KO_i(A)$.  The identification $[u] = [\diag(u, I^{(i)})] \in KO_i(A)$ shows how to identify a unitary in $M_n(A\sc)$ with a unitary in $M_{n+n_i}(A\sc)$ representing the same $KO$-class. Note that in the cases for $i = 0$ and $i = 1$, the symmetry $u\Ttau = u^*$ is equivalent to saying that $u$ lives in the real $C \sp *$-algebra $A$ (or a matrix algebra over $A$).

We also note that there are two lines in the table for each of $KO_{-1}(A)$ and $KO_3(A)$. The first line represents the picture of $KO_i(A)$ developed in \cite{BL} and also used in \cite{Boer2020}. The second line represents an alternative variation of the unitary picture of $KO_i(A)$ developed and described in the Appendix of \cite{boersema-spheres1} (the results there also include specific descriptions of the formulas for the homomorphisms from one version to the other).
We will use both versions as convenient.

Throughout, we will also use some of the language of united $K$-theory (introduced in \cite{bousfield90}, \cite{Boer2002} and summarized for these purposes in \cite{boersema-spheres1}). Briefly, for a real $C \sp *$ algebra $A$, we consider the period-8  $K$-theory $KO_*(A)$ along with the period-2 $K$-theory of the complexification $KU_*(A) = K_*(A\sc)$. This package $K\crr(A) = \{ KO_*(A), KU_*(A) \}$
also comes with a collection of natural transformations among them. In particular, the natural transformations $r,c, \eta$ contribute to the long exact sequence 
\begin{equation}
\label{eq:CR-LES}
\dots \rightarrow
KO_{i}( A) \xrightarrow{\eta} 
KO_{i+1}( A) \xrightarrow{c} 
KU_{i+1}( A) \xrightarrow{r \beta\su^{-1}} 
KO_{i-1}(A) \rightarrow
\cdots  \; .
\end{equation}
which we find indispensible. This ensemble of information is called a $\mathcal{CR}$-module. For us the important thing to know is that $K\crr( C\pr(S^d, \tau^{a,b}))$ is a free $\mathcal{CR}$-module generated by the key generator $g_{a,b} \in KO_{b-a+1}( C\pr(S^d, \tau^{a,b}))$.

The natural transformations of united $K$-theory are defined by an underlying $C \sp *$-algebra homomorphism (or in some cases, by a $KK$-element). For example
$c \colon KO_*(A) \rightarrow KU_*(A)$ is induced by the natural inclusion $A \hookrightarrow A\sc$; and 
$r \colon KU_*(A) \rightarrow KO_*(A)$ is induced by the natural inclusion $A\sc \rightarrow M_2(A)$. 
Moreover, in \cite{Boer2020} we developed explicit descriptions of all of these natural transformations in terms of the unitary picture of $K$-theory. We will use these formulas frequently in the following. Furthermore, in the higher-dimensional cases, in Section~\ref{section-highdim}, we content ourselves with finding the unitary representations of the key generator $g_{a,b}$, knowing that all other elements of $KO_*(C\pr(S^d, \tau^{a,b}))$ can be derived from $g_{a,b}$ using the natural transformations.

For reference, we show in Table~\ref{Table-freemodule} the structure of the free $\mathcal{CR}$-module $M = (M\po_*, M\pu_*)$ generated by a single element $g \in M\po_0$.  
\begin{table}[b]  
\caption{The free $\mathcal{CR}$-module $M$} \label{Table-freemodule}
\[ \begin{array}{|c|c|c|c|c|c|c|c|c|c|}  
\hline \hline  
n & ~0~ & ~1~ & ~2~ & ~3~ & ~4~ & ~5~ & ~6~ & ~7~  \\
\hline  \hline
M\po_n
& \Z  & \Z_2  & \Z_2  & 0 
	& \Z  & 0 & 0 & 0  \\
\hline  
M\pu_n 
& \Z  & 0 & \Z  & 0 & \Z   & 0 
	& \Z  & 0   \\
\hline \hline
\eta_n & 1 & 1 & 0 & 0 & 0 & 0 & 0 & 0 \\
\hline 
c_n & 1 & 0 & 0 & 0 & 2 & 0 & 0 & 0   \\
\hline
r_n & 2 & 0 & 1 & 0 & 1 & 0 & 0 & 0      \\
\hline
\hline \hline
\end{array} \]
\end{table}
Note that $K\crr(\R) \cong M$ and that $K\crr(S^d \R) = \Sigma^d M$ for any integer $d$.
We will use this structure frequently in the calculations of the following sections. Note that $c_0 \colon M\po_0 \rightarrow M\pu_0$ is an isomorphism
and that $r_i \colon M\pu_i \rightarrow M\pu_i$ are surjections for $i = 2,4$.


\section{Unitary Generators in Low Dimensions} \label{section-lowdim}

In this section, we present explicit unitary generators for all the $K$-theory groups low dimensional sphere. The goal here is to have  formulas ready off-the-shelf in the low-dimensional cases that can be used for topological localizers in mathematical physics. We present the formulas for generators of all non-zero groups $KO_i(C\pr(S^d, \tau^{a,b}))$ where $d = 1,2,3$ and $a \neq 0$. Some of these cases appear previously in \cite{Boer2020}, which we restate here without proof for completeness.

\subsection{All 1-spheres}

First we present unitary generators for all the $K$-theory groups for the circle $S^1$, either with the trivial involution $\tau^{2,0}$ or with the involution $\tau^{1,1}$ given by $(x,y) \mapsto (x,-y)$ on $S^1$. Recall that
\begin{align*} \widetilde{KO}_*(C\pr(S^1, \tau^{1,1})) &= \Sigma^{-1} KO_*(\R) \\
\text{and } \widetilde{KO}_*(C\pr(S^1, \tau^{2,0})) &= \Sigma KO_*(\R) \; 
\end{align*}
so the key generators $g_{a,b}$ generating each $\mathcal{CR}$-module are in degree $1$ and in degree $-1$, respectively.

The results shown in Tables~\ref{S1unitary1} and \ref{S1unitary2} are from Theorem~4.2 and Theorem~4.3 of \cite{Boer2020}. 

\begin{table}[h]
\caption{Unitaries for $\widetilde{K}\crr(C\pr(S^1, \tau^{1,1}))$.} \label{S1unitary1}
$$\begin{array}{|c|c|c|}  \hline
   (x,y) \mapsto (x,-y) & \text{isomorphism class} & 
 	\multicolumn{1}{|c|}{ \text{unitary representing a generator}}  \\ \hline \hline
\widetilde{KU}_0 & 0 &  \\ \hline
\widetilde{KU}_1 & \Z & y_1 = x+iy  \\ \hline  \hline
\widetilde{KO}_0 & 0 &  \\ \hline
\widetilde{KO}_1  & \Z  & x_1 = x+iy  \\ \hline 
\widetilde{KO}_2 & \Z_2  & x_2 = \sm{y}{ix}{-ix}{y} \\ \hline 
\widetilde{KO}_3  & \Z_2 & x_3 = \sm{x+iy}{0}{0}{x-iy} \\ \hline 
\widetilde{KO}_4  & 0  &  \\ \hline 
\widetilde{KO}_5  & \Z & x_5 = \sm{x+iy}{0}{0}{x+iy}  \\ \hline 
\widetilde{KO}_6  & 0 & \\ \hline 
\widetilde{KO}_7  & 0 & \\  \hline 
\end{array}$$
\end{table}

\begin{table}
\caption{Unitaries for $\widetilde{K}\crr(C\pr(S^1, \tau^{2,0}))$.} \label{S1unitary2}
$$\begin{array}{|c|c|c|}  \hline
  (x,y) \mapsto (x,y) & \text{isomorphism class} & 
 	\multicolumn{1}{|c|}{ \text{unitary representing a generator}}  \\ \hline \hline
 \widetilde{KU_0} & 0  &  \\ \hline
 \widetilde{KU_1} & \Z  & y_1 = x + iy  \\ \hline \hline
\widetilde{KO}_{-1} & \Z & x_{-1} = x + iy   \\ \hline 
\widetilde{KO}_0 & \Z_2 & x_{0} = \sm{x}{y}{y}{-x}  \\ \hline 
\widetilde{KO}_1 & \Z_2 & x_{1} =  \sm{x}{-y}{y}{x}  \\ \hline
\widetilde{KO}_2  & 0 &  \\ \hline 
\widetilde{KO}_3 & \Z & x_{3} =  \sm{x+iy}{0}{0}{x+iy}  \\ \hline
\widetilde{KO}_4 & 0 & \\ \hline 
\widetilde{KO}_5  & 0 & \\ \hline 
\widetilde{KO}_6 & 0 & \\ \hline 
\end{array}$$
\end{table}

\subsection{All 2-spheres}

Here we find unitary generators for following, which represent all involutions (except the antipodal involution) on the sphere $S^2$.
Recall that
\begin{align*} 
\widetilde{KO}_*(C\pr(S^2, \tau^{3,0})) &= \Sigma^2 KO_*(\R) \,  \; \\
\widetilde{KO}_*(C\pr(S^2, \tau^{2,1})) &=  KO_*(\R) \, ,  \\
\text{and~}   	 \widetilde{KO}_*(C\pr(S^2, \tau^{1,2})) &=  \Sigma^{-2} KO_*(\R) \, , \; \\
\end{align*}
so the key generators will be in degrees $-2,0,2$ respectively.

These results are recorded in Tables~\ref{S2unitaryA}, \ref{S2unitaryB}, and \ref{S2unitaryC}. For the first two of these tables, these result are from Theorem 4.4 and Theorem 4.5 of \cite{Boer2020}. For Table \ref{S2unitaryC}, we state and prove this result for the first time here in Proposition~\ref{S2:1,2}.

\begin{table}[h]
\caption{Unitaries for $\widetilde{K}\crr(C\pr(S^2, \tau^{3,0} ))$.} \label{S2unitaryA}
$$\begin{array}{|c|c|c|}  \hline
 (x,y,z) \mapsto (x,y,z) & \text{isomorphism class} 
 	 & { \text{unitary representing a generator}}  \\ \hline \hline
 \widetilde{KU}_0  & \Z & y_0 = \sm{x}{y-iz}{y+iz}{-x}  \\ \hline
 \widetilde{KU}_1  & 0 &   \\ \hline \hline
\widetilde{KO}_{-2}  & \Z & x_{-2} = \sm{x}{y-iz}{y+iz}{-x} \\ \hline
\widetilde{KO}_{-1}  & \Z_2 & x_{-1} = \sm{x+iy}{iz}{iz}{x-iy} \\ \hline
\widetilde{KO}_{0} & \Z_2 &  x_0 = \left(
\begin{smallmatrix}
x & 0 & y & -z \\
0 & x & z & y \\
y & z & -x & 0 \\
-z & y & 0 & -x
\end{smallmatrix} \right)     \\ \hline  
\widetilde{KO}_{1}  & 0 &    \\ \hline 
\widetilde{KO}_{2} & \Z & x_2 =  i \left(
 \begin{smallmatrix}
0 & x & z & y \\ -x & 0 & y & -z \\ -z & -y & 0 & x \\ -y & z & -x & 0
\end{smallmatrix} \right) \\ \hline 
\widetilde{KO}_{3}& 0 &   \\ \hline 
\widetilde{KO}_{4} & 0 & \\ \hline
\widetilde{KO}_{5} & 0 &     \\  \hline 
\end{array}$$
\end{table}

\begin{prop} \label{S2:1,2}
Table \ref{S2unitaryC} shows unitary representatives of generators for $\widetilde{K}\crr(C\pr(S^2,  \tau^{1,2}))$.
\end{prop}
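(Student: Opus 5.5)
The plan is to handle the key generator first and then obtain every other entry of Table~\ref{S2unitaryC} from it using the $\mathcal{CR}$-module structure. For $\tau^{1,2}$, $(x,y,z)\mapsto(x,-y,-z)$, we have $\widetilde{K}\crr(C\pr(S^2,\tau^{1,2}))\cong\Sigma^{2}M$, with $M$ the free module of Table~\ref{Table-freemodule}. So the reduced groups are $\widetilde{KO}_2\cong\Z$, $\widetilde{KO}_3\cong\Z_2$, $\widetilde{KO}_4\cong\Z_2$, $\widetilde{KO}_6\cong\Z$, and $\widetilde{KU}_0\cong\Z$, with the key generator $g_{1,2}$ in degree $2$.

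\textbf{Step 1: check each listed matrix is an admissible cycle.} For every unitary in Table~\ref{S2unitaryC} I would verify three things.
\begin{enumerate}
\item It is unitary, using $x^2+y^2+z^2=1$.
\item It satisfies the symmetry $\mathscr{S}_i$ of Table~\ref{unitaryTable} for its degree. Here $f^\tau(x,y,z)=f(x,-y,-z)$ for scalar functions, and $\rm{Tr}$, $\sharp\otimes\rm{Tr}$ act on the matrix part.
\item At the fixed point $p=(1,0,0)$ it evaluates to a unitary that is homotopic to the neutral element $I^{(i)}$ within the same symmetry class, so the class lies in $\widetilde{KO}_i$.
\end{enumerate}
For the $\widetilde{KU}_0$ entry I would use the Schulz-Baldes--Stoiber generator $Q_2$ from Equation~(\ref{QandU}) with Pauli matrices. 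Its class is a generator by \cite{SBS}. The degree-$2$ candidate should be a rearrangement of $Q_2$, with factors of $i$ inserted so that $u=u^*$ and $u\Ttau=-u$ hold. Moving $y,z$ into the imaginary or antisymmetric slots exactly absorbs the sign flips of the involution.

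\textbf{Step 2: show the degree-2 unitary generates.} By Table~\ref{Table-freemodule}, shifted by $2$, the map $c\colon\widetilde{KO}_2\to\widetilde{KU}_2\cong\widetilde{KU}_0$ is an isomorphism. So it suffices to compute $c[x_2]$ using the explicit formula from \cite{Boer2020} for $c$ in the unitary picture. That formula conjugates the degree-$2$ unitary by a fixed scalar unitary to produce a self-adjoint unitary. I would then show the result is homotopic, or unitarily equivalent by a constant unitary, to $\diag(Q_2,\text{neutral})$ or its negative. Since $c$ is injective and $[Q_2]$ generates, $[x_2]$ generates $\widetilde{KO}_2$.

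\textbf{Step 3: the remaining real groups.} In $\Sigma^2M$ we have:
\begin{itemize}
\item $\eta\colon\widetilde{KO}_2\to\widetilde{KO}_3$ and $\eta\colon\widetilde{KO}_3\to\widetilde{KO}_4$ are surjective;
\item $r\colon\widetilde{KU}_6\to\widetilde{KO}_6$ is an isomorphism, corresponding to $r_4=1$ in $M$.
\end{itemize}
So I would apply the explicit unitary formulas of \cite{Boer2020} for $\eta$ (once and twice) and for $r$ composed with Bott periodicity to $x_2$ and $Q_2$. I would then deform the resulting matrices, by a constant unitary conjugation and a homotopy within the correct symmetry class, to the table's $x_3,x_4,x_6$. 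Where convenient I would use the alternative picture of $KO_3$ from the Appendix of \cite{boersema-spheres1}. As an independent check on the torsion entries, I would use the exact sequence~(\ref{eq:CR-LES}): $x_3$ and $x_4$ are nonzero exactly when they are not in the image of the preceding map, which can be read off from the $KU$ classes.

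\textbf{Main obstacle.} I expect the hardest part to be matching the output of the natural-transformation formulas to the specific compact matrices in the table, especially for the $\Z_2$ entries. There no integer invariant is available to confirm the identification, so an explicit homotopy respecting the symmetry has to be written down. My first attempt would be straight-line or rotation homotopies between the two matrices, checked to stay unitary and symmetric throughout. Failing that, I would verify nontriviality directly via exactness of~(\ref{eq:CR-LES}) rather than by matching formulas.
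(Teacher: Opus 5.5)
Your treatment of the key generator matches the paper's. Here $x_2$ is literally $y_0=y\sigma_1+x\sigma_2+z\sigma_3$: the antisymmetric $\sigma_2$ is paired with the $\tau$-fixed coordinate $x$, so $x_2\Ttau=-x_2$ holds with no extra factors of $i$. The map $c_2$ simply forgets the symmetry, and $c_2$ is an isomorphism, so $[x_2]=g_{1,2}$. Likewise, the formulas of \cite{Boer2020} give the table's $x_3$ and $x_6$ verbatim: apply $\eta_2$ to $x_2$ for $x_3$, and $r_6$ to $y_0$ for $x_6$. So the homotopy-matching you flag as the main obstacle never arises for those entries.

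The genuine gap is $x_4$. Your route $\eta_3\eta_2[x_2]$ does give the nonzero class of $\widetilde{KO}_4\cong\Z_2$, but not the table's matrix. The $\eta_3$ formula, as quoted in the $\tau^{1,3}$ computation ($\sm{a}{ib}{ib}{-a}$ with $a=\frac12(u+u^*)$), puts $\pm x$ on the diagonal, whereas the table's $x_4$ has $\pm z$ there. You would therefore still owe an explicit homotopy inside the $KO_4$ symmetry class, and you do not supply one.

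Your fallback via exactness cannot replace that homotopy. The map $c$ vanishes on $\widetilde{KO}_3$ and on $\widetilde{KO}_4$. Both maps into $\widetilde{KO}_4$, namely $\eta_3\colon\widetilde{KO}_3\to\widetilde{KO}_4$ and $r\beta\su^{-1}\colon\widetilde{KU}_6\to\widetilde{KO}_4$, are onto. So the criterion ``nonzero iff not in the image of the preceding map'' is false here (it would declare the nonzero class zero), and no $KU$-data detects these $\Z_2$ classes. A torsion entry can only be certified by exhibiting it as the image of a generator under a natural transformation known to be surjective.

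The missing step is to choose the transformation whose formula outputs the table's matrix directly. The map $r_4\colon\widetilde{KU}_4\to\widetilde{KO}_4$ is onto (column $n=2$ of Table~\ref{Table-freemodule}). With $a=\frac12(y_0+y_0\stau)=\sm{z}{y}{y}{-z}$ and $b=\frac12(y_0-y_0\stau)=\sm{0}{ix}{-ix}{0}$, the formula $r_4[u]=\sm{a}{ib}{-ib}{a}$ gives exactly the table's $x_4$, with no homotopy needed.
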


\begin{proof}
The involution $\tau = \tau^{1,2}$ on $S^2$ is given on the coordinates by $(x,y,z) \mapsto (x,-y,-z)$.
We make use of the Clifford generators (the Pauli matrices)
$$\sigma_1 = \begin{pmatrix} 0 & 1 \\ 1 & 0 \end{pmatrix} \; , \quad
\sigma_2 =  \begin{pmatrix} 0 & i \\ -i & 0 \end{pmatrix} \; , \quad
\sigma_3 =  \begin{pmatrix} 1 & 0 \\ 0 & -1 \end{pmatrix} \; $$
so the self-adjoint unitary
$$y_0(x,y,z) =  y \sigma_1 + x \sigma_2 + z \sigma_3 = \sm{z}{y+ix}{y-ix}{-z}$$
is a generator of $\widetilde{K}_0(C(S^2)) = \widetilde{KU}_0(C\pr(S^2, \tau^{1,2}))$ by Proposition~1 of \cite{SBS}.

Now, setting 
$$x_2(x,y,z) =  \sm{z}{y+ix}{y-ix}{-z}$$
we check that $x_2$ is a self-adjoint unitary that satisfies $u\Ttau = -u$. Indeed,
$$x_2\Ttau = \sm{z}{y+ix}{y-ix}{-z} \Ttau
	= \sm{z^\tau}{(y-ix)^\tau}{(y+ix)^\tau}{-z^\tau} = \sm{-z}{-y-ix}{-y+ix}{z} = -x_2\; .$$
We also check that
\begin{center}  $\ev_*[x_2] = [\ev(x_2)] = [\sm{0}{i}{-i}{0}] = [ I^{(2)}  ] = 0$ in $KO_2(\R)$.\end{center}
where $\ev$ is evaluation at the point $(1,0,0,0)$.
Thus $[x_2] \in \ker(\ev_*)$
and from this is follows that $[x_2]$ is a legitimate element of $\widetilde{KO}_2(C\pr(S^2, \tau^{1,2}))$.

We now argue that $[x_2]$ is in fact a generator of $\widetilde{KO}_2(C\pr(S^2, \tau^{1,2})) = \Z$.
The complexification map 
$$c_{n} \colon \widetilde{KO}_{n}(A) \rightarrow \widetilde{KU}_{n}(A)$$ is defined in the unitary picture of $K$-theory by forgetting the extra symmetry that a self-adjoint unitary representing real $K$-theory must satisfy (see Theorem~3.1 of \cite{Boer2020}). 
In this case at hand we see that $c_2[x_2] = [y_0]$. Also, 
for $A = C\pr(S^2, \tau^{1,2})$, 
we know a priori that $c_{2}$ is an isomorphism, because of the structure of the free $\mathcal{CR}$-module, namely that
$\widetilde{K}_*\crr(C\pr(S^2, \tau^{1,2}))$ is a free $\mathcal{CR}$-module with generator in $\widetilde{KO}_{2}(C\pr(S^2, \tau^{1,2})) = \Z$ (because of the suspension, this corresponds to the column for $n = 0$ in Table~\ref{Table-freemodule}).
Since $c_2$ maps $[x_2]$ to a generator of $\widetilde{KU}_0(C\pr(S^2, \tau^{1,2}))$, this shows that $[x_2]$ must be a generator of $\widetilde{KO}_{2}(C\pr(S^2, \tau^{1,2})) = \Z$. 
 
 Since $\eta_2$ is surjective, it must be that $\eta_2[x_2]$ is a generator of $\widetilde{KO}_{3}(C\pr(S^2, \tau^{1,2})) = \Z_2$. The formula from Theorem~5.1 of \cite{Boer2020} is given by
 $$\eta_2\left[ \sm{a}{b}{c}{d} \right] = \sm{ic}{d}{a}{-ib} \; .$$
 We apply this to $x_2$ to find that a formula for $x_3$ is
 $$x_3 = \sm{x+iy}{-z}{z}{x-iy}\; .$$
 (We can double-check that $x_3$ actually represents an element of $KO_3(C\pr(S^2, \tau^{1,2}))$ by verifying
  that $x_3\stau = x_3$.)

To find a formula for $x_4$, we use the fact that $r_4\colon \widetilde{KU}_4(A) \rightarrow \widetilde{KO}_4(A)$ is an surjection for $A = C\pr(S^2, \tau^{1,2})$ (see the column for $n = 2$ in Table~\ref{Table-freemodule}). The formula for $r_4$ from Theorem 3.2 of \cite{Boer2020} is
$$r_4(u) = \sm{a}{ib}{-ib}{a}$$
where 
$$a = \tfrac{1}{2}\left(u + u\stau\right) \quad \text{and} \quad b = \tfrac{1}{2}\left( u - u\stau \right) \; .$$
For us, we use 
$$u = y_0 = \sm{z}{y + ix}{y - ix}{-z}$$ 
and calculate
$$u^{\sharp \otimes \tau} = \sm{z}{y + ix}{y-ix}{-z} \, \quad
	a = \sm{z}{y}{y}{-z} \, \quad
	b = \sm{0}{ix}{-ix}{0} \; .$$
From this we obtain the formula for $x_4$ given in Table~\ref{S2unitaryC}.

Finally, to find a formula for $x_6$ we use the fact that 
$r_6 \colon \widetilde{KU}_6(A) \rightarrow \widetilde{KO}_6(A)$ is an isomorphism for $A = C(S^2, \tau^{1,2})$. 
The formula for $r_6$ (from Theorem~3.2 of \cite{Boer2020}) is given by
$$r_6[u] = \left[ \begin{pmatrix} a & ib \\ -ib & a \end{pmatrix} \right]$$
where $a = \tfrac{1}{2}(u - u\sTtau)$ and $b = \tfrac{1}{2}(u + u\sTtau)$.
Applying this to $u = y_0 = y_6$ gives us
$a = \sm{0}{ix}{-ix}{0}$ and $b = \sm zyy{-z}$ and then we obtain the formula for $x_6$ in Table~\ref{S2unitaryC}.
\end{proof}

We note that a formula for an alternative version of $x_6$ to the one in Table~\ref{S2unitaryC} (and perhaps a preferable one) is given by
$$x_6 = \left( \begin{smallmatrix}
z & 0 & y+ix & 0 \\ 0 & z & 0 & y+ix \\ y-ix & 0 & -z & 0  \\ 0 & y-ix & 0 & -z 
\end{smallmatrix} \right) \; .$$
This version is obtained applying the natural transformation $\xi_2 \colon \widetilde{KO}_2(A) \rightarrow \widetilde{KO}_6(A)$ which is known to be an isomorphism and using the formula for $\xi_2$ in Theorem~5.2 of \cite{Boer2020}. It is not immediately obvious that these two unitaries represent the same element of $\widetilde{KO}_6(A)$, but they must since the relation $r \beta\su^2 c = \xi$ always holds (where $\beta\su$ is the complex Bott periodicity isomorphism).

%

\vspace{.2cm}

\begin{table}[h]
\caption{Unitaries for $\widetilde{K}\crr(C\pr(S^2, \tau^{2,1}))$.} \label{S2unitaryB}
$$\begin{array}{|c|c|c|}  \hline
 (x,y,z) \mapsto (x,y,-z)  & \text{isomorphism class} 
 	 & { \text{unitary representing a generator}}  \\ \hline \hline
 \widetilde{KU}_0 & \Z & y_0 = \sm{x}{y-iz}{y+iz}{-x}   \\ \hline
  \widetilde{KU}_1 & 0 &   \\ \hline \hline
\widetilde{KO}_0 & \Z & x_{0} = \sm{x}{y-iz}{y+iz}{-x} \\ \hline
\widetilde{KO}_1  & \Z_2  & x_1 =\sm{x}{-y + iz}{y+iz}{x} \\ \hline
\widetilde{KO}_2  & \Z_2 & x_2 =  \left( \begin{smallmatrix}
0 & 0 & ix & z + iy   \\ 0 & 0 & -z + iy & -ix \\ -ix & -z - iy & 0 & 0 \\  z - iy & ix & 0 & 0 
\end{smallmatrix} \right)   \\ \hline 
\widetilde{KO}_3  & 0 &    \\ \hline 
\widetilde{KO}_4 & \Z & x_4 = \left( \begin{smallmatrix}
x & 0 & y-iz &  0 \\ 0 & x & 0 & y- iz \\ y + iz & 0 & -x & 0\\ 0 &  y + iz  & 0 & -x
\end{smallmatrix} \right)    \\  \hline 
\widetilde{KO}_5 & 0 &   \\ \hline 
\widetilde{KO}_6 & 0 & \\ \hline
\widetilde{KO}_7  & 0 &     \\  \hline 
\end{array}$$
\end{table}

\vspace{.2cm}

\begin{table}[h]
\caption{Unitaries for $\widetilde{K}\crr(C\pr(S^2,  \tau^{1,2}))$.} \label{S2unitaryC}
$$\begin{array}{|c|c|c|}  \hline
  (x,y,z) \mapsto (x,-y,-z) & \text{isomorphism class} 
 	 & { \text{unitary representing a generator}}  \\ \hline \hline
 \widetilde{KU}_0 & \Z & y_0 = \sm{z}{y+ix}{y-ix}{-z}    \\ \hline
 \widetilde{KU}_1 & 0 &   \\ \hline \hline
 \widetilde{KO}_0 & 0 & \\ \hline
 \widetilde{KO}_1 & 0  &   \\ \hline 
  \widetilde{KO}_2 & \Z & x_2 = \sm{z}{y+ix}{y-ix}{-z} \\ \hline 
 \widetilde{KO}_3 & \Z_2 & x_3 =\sm{x+iy}{-z}{z}{x - iy} \\ \hline 
 \widetilde{KO}_4 & \Z_2 & x_4 = \left(
\begin{smallmatrix}
z & y & 0 & -x \\ y & -z & x & 0 \\ 0 & x & z &y \\ -x & 0 & y & -z
\end{smallmatrix} \right)     \\ \hline 
  \widetilde{KO}_5 & 0 &   \\ \hline 
 \widetilde{KO}_6  & \Z & x_6 = i  \left( \begin{smallmatrix}
 0 & x & z & y \\ -x & 0 & y & -z \\ -z & -y & 0 & x \\ -y & z & -x & 0
\end{smallmatrix} \right)    \\  \hline 
 \widetilde{KO}_7 & 0 & \\ \hline \hline
\end{array}$$
\end{table}

\normalsize

\subsection{All 3-spheres}

Here we find unitary generators for 
\begin{align*} 
		\widetilde{KO}_*(C\pr(S^3, \tau^{4,0})) &= \Sigma^{3} KO_*(\R) \, ,  \\
	 	\widetilde{KO}_*(C\pr(S^3, \tau^{3,1})) &=  \Sigma^{1} KO_*(\R) \, , \; \\
	  	\widetilde{KO}_*(C\pr(S^3, \tau^{2,2})) &=  \Sigma^{-1} KO_*(\R) \, , \; \\
\text{and~} \widetilde{KO}_*(C\pr(S^3, \tau^{1,3})) &= \Sigma^{-3} KO_*(\R) \, , \; \\
\end{align*}
and the key generators will be in degrees $-3,-1,1, 3$ respectively.

These results are recorded in Tables~\ref{S3unitaryA}, \ref{S3unitaryB}, \ref{S3unitaryC}, and \ref{S3unitaryD} below and the proofs for all of these are included.

\begin{table}[h]
\caption{Unitaries for $\widetilde{K}\crr(C\pr(S^3, \tau^{4,0}))$.} \label{S3unitaryA}
$$\begin{array}{|c|c|c|}  \hline
 (x,y,z, w) \mapsto (x,y,z, w)  & \text{isomorphism class} & { \text{unitary representing a generator}}  \\ \hline \hline
 \widetilde{KU}_0 & 0 &  \\ \hline
 \widetilde{KU}_1 & \Z &   y_1 = \sm{x + iw}{y - iz}{y + iz}{-x + iw}       \\ \hline \hline
\widetilde{KO}_0 & 0 & 				\\ \hline 
\widetilde{KO}_1 &  \Z &  x_1 = 
	\left( \begin{smallmatrix}
			x & y & w & -z \\ y & -x & z & w \\  -w & z & x & y \\ -z & -w & y & -x
	\end{smallmatrix} \right)  				\\ \hline 
\widetilde{KO}_2  & 0 &  				\\ \hline
\widetilde{KO}_3 & 0 & 				 \\ \hline 
\widetilde{KO}_4  & 0 & 				 \\ \hline
\widetilde{KO}_5  & \Z & 	
	x_5 = \sm{w - ix}{-z-iy}{z-iy}{w+ix}	 \\ \hline 
\widetilde{KO}_6 & \Z_2 & 		
	x_6 = \left( \begin{smallmatrix}
			0 & 0 & x + iw & y - iz \\ 0 & 0 & y + iz & -x + iw \\ x - iw & y - iz & 0 & 0 \\ y + iz & -x - iw & 0 & 0
		\end{smallmatrix} \right)		\\ \hline 
\widetilde{KO}_7 & \Z_2 & x_7 = 
	\left( \begin{smallmatrix}
			x + iw & y & 0 & z \\ y & -x + iw & -z & 0 \\  0 & -z & x + iw & y \\ z & 0 & y & -x + iw					
	\end{smallmatrix} \right)  				\\ \hline 
\end{array}$$
\end{table}

\begin{prop}
Table \ref{S3unitaryA} shows unitary representatives of generators for $\widetilde{K}\crr(C\pr(S^3, \tau^{4,0}))$.
\end{prop}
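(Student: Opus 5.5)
I will follow the same template as the proof of Proposition~\ref{S2:1,2}. Here the involution $\tau^{4,0}$ is trivial on $S^3$, so $A = C\pr(S^3,\tau^{4,0})$ consists of the real-valued functions on $S^3$. For $f\in A\sc = C(S^3)$ we have $f^\tau = \overline{f}$ pointwise, so $u\Ttau$ is just $u^T$ with every entry complex conjugated. The key generator $g_{4,0}$ sits in $\widetilde{KO}_{-3} = \widetilde{KO}_5 \cong \Z$, and the free $\mathcal{CR}$-module structure is Table~\ref{Table-freemodule} shifted by $5$. Concretely, the column $n=0$ corresponds to degree $5$, $n=1,2$ to degrees $6,7$, and $n=4$ to degree $9\equiv 1$.

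\textbf{Complex generator.} Using Pauli-type Clifford generators, $y_1 = \sm{x+iw}{y-iz}{y+iz}{-x+iw}$ is of the form $U_3$ in Equation~(\ref{QandU}). By \cite{SBS} it is therefore a generator of $K_1(C(S^3)) \cong \Z$.

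\textbf{Degree $5$.} I will check that $x_5$ is unitary and satisfies $x_5\sTtau = x_5^*$. The computation is entrywise: apply $\sharp$ to the single $2\times2$ block and conjugate the entries. I will also check that $\ev_*[x_5]=0$ at a point such as $(0,0,0,1)$, where $x_5 = I$.

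Next I want to show that $c_5[x_5]$ generates $KU_5 = KU_1$. Here $c_5$ is not mere forgetting. I will use the explicit description of $c$ in degree $5$ (equivalently of $c_{-3}$) from Theorem~3.1 of \cite{Boer2020}. Alternatively, I will observe that $x_5$ differs from $y_1$ by a reordering and sign change of coordinates: $x_5$ is $w I$ plus $i$ times a Clifford combination of $x,y,z$. Hence it defines a map $S^3\to SU(2)$ of degree $\pm1$, and so it represents $\pm[y_1]$ in $K_1(C(S^3))$.

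By the free-module structure, $c$ in the generator's column is an isomorphism $\Z\to\Z$. So if $c_5[x_5] = \pm$ a generator, then $[x_5]$ generates $\widetilde{KO}_5$.

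\textbf{Degrees $6$ and $7$.} Since $\eta$ is surjective in the columns $n=0$ and $n=1$, I will take $x_6=\eta_5[x_5]$ and $x_7 = \eta_6[x_6]$, using the formulas for $\eta$ in each degree from Theorem~5.1 of \cite{Boer2020}. I will then verify directly that the results are the displayed matrices, or at least equivalent to them after the stabilizations and conjugations permitted in Table~\ref{unitaryTable}. For $x_7$, which lies in $KO_7 = KO_{-1}$, I may need the alternative picture from the appendix of \cite{boersema-spheres1}, since the displayed $x_7$ is a $4\times4$ matrix. I will check that $x_7\Ttau = x_7$, which is just symmetry in the trivial-involution case.

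\textbf{Degree $1$.} In the column $n=4$, $r$ is surjective onto $\widetilde{KO}_1$. So I will set $x_1 = r_1[y_1]$, using the $r$ formula in odd degree from Theorem~3.2 of \cite{Boer2020}; this should yield the real $4\times 4$ matrix $x_1$. Concretely, realification of a complex unitary replaces $a+ib$ by $\sm{a}{-b}{b}{a}$. Applied blockwise to $y_1$, this gives a real orthogonal matrix, which should match $x_1$ up to a permutation.

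\textbf{Main obstacle.} I expect the hardest part to be bookkeeping rather than anything conceptual. The formulas for $\eta$, $c$ and $r$ differ by degree, and the stated $x_6, x_7, x_1$ may only agree with the outputs of those formulas up to homotopy or unitary conjugation. Where they fail to match literally, I would instead verify the symmetry condition directly. I would then confirm nontriviality by a different route: for $x_1$, compute $c_1[x_1]$, which must be twice a generator; for the $\Z_2$ classes, check that they are images under $\eta$.
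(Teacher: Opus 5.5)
Your overall template matches the paper: $c_5$ is an isomorphism to get $x_5$, $\eta_5$ gives $x_6$, and realification $r_1[y_1]$ gives $x_1$. However, there is a genuine error in your setup that would derail the verifications. The involution on $A\sc = C(S^3)$ is $\tau(a+ib)=a^*+ib^*$, which is complex-linear; for real-valued $a,b$ it is the identity. In general $f^\tau = f\circ\tau$, as in the computation of $x_2\Ttau$ in Proposition~\ref{S2:1,2}. So $u\Ttau$ is the plain transpose $u\T$ and $u\sTtau = u^{\sharp\otimes{\rm Tr}}$, with no conjugation. Under your convention $u\Ttau$ would equal $u^*$, which makes the $KO_1$ condition vacuous, and your checks fail. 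For example, conjugating the entries of $x_5^\sharp = \sm{w+ix}{z+iy}{-z+iy}{w-ix}$ does not give $x_5^*$, and $x_7$ is symmetric but not self-adjoint. With the correct convention, $x_5^\sharp = x_5^*$, $x_7\T = x_7$, and the $KO_1$ condition says exactly that $x_1$ is real. The $r$-formulas then also produce the right matrices.

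The second gap is $x_7$. $\eta_6[x_6]$ does represent the nonzero element of $\widetilde{KO}_7\cong\Z_2$, but nothing in your plan shows it coincides with the displayed $x_7$. If it does not, your fallback (``check that they are images under $\eta$'') assumes exactly what is in question. The paper instead uses that $r$ is surjective onto this group (column $n=2$ of Table~\ref{Table-freemodule}). With $a=\frac12(y_1+y_1\T)=\sm{x+iw}{y}{y}{-x+iw}$ and $b=\frac12(y_1-y_1\T)=\sm{0}{-iz}{iz}{0}$, the element $r_7[y_1]=\sm{a}{ib}{-ib}{a}$ is literally the displayed $x_7$. No alternative picture is needed, since the first picture of $KO_{-1}$ has $n_{-1}=1$ and allows $4\times 4$ matrices.

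Finally, $c$ is the forgetful map in every degree (Theorem~3.1 of \cite{Boer2020}). In fact $x_5=-iy_1$, so $c_5[x_5]=[y_1]$ immediately; your degree argument already tacitly relies on $c_5$ being forgetful.
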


We note here we could apply any permutation of the variables $x,y,z,w$ in these formulas or replace any of the variables with its negative, and we would still have a generator of the appropriate group, as these transformations all arise from self-homeomorphisms on $S^3$ which induce automorphisms on $K$-theory. Also note that $[y_1] = [\lambda y_1]$ in $\widetilde{K}_1(C\pr(S^3, \C))$ for any unital complex scalar $\lambda$; and more generally $[y_1] = [u y_1]$ for any unitary $u \in M_2(\C)$.

\begin{proof}
Again we use the Clifford generators $\sigma_1, \sigma_2, \sigma_3$ from the proof of Proposition~\ref{S2:1,2}
and set $$y_1(x,y,z,w) =  y \sigma_1 + x \sigma_2 + z \sigma_3 + w  i I_2 = \sm{x+iw}{y-iz}{y+iz}{-x+iw} \; , $$
so $[y_1]$ is a generator of $\widetilde{K}_1(C(S^3))$ by Proposition~1 of \cite{SBS}.

Now, let 
$x_5$ be as given the table and check that $x_5\stau = x_5^*$ so that $[x_5]$ represents an element of
$\widetilde{KO}_5(C\pr(S^3, \tau^{4,0}))$. Also, note that $x_5$ is the same as $-i y_1$ so it follows from this that $c_5([x_5]) = [-i y_1] = [y_1]$.
Since 
$$c_5 \colon \widetilde{KO}_5(C\pr(S^3, \tau^{4,0})) \rightarrow \widetilde{KU}_5(C\pr(S^3, \tau^{4,0})) \, $$
is an isomorphism,
it follows that $[x_5]$ must be a generator of $\widetilde{KO}_5(C\pr(S^3, \tau^{4,0}))$. This is the key generator of
$\widetilde{K}\crr(C\pr(S^3, \tau^{4,0}))$.

We know that $\eta_5[x_5]$ is the non-zero element of $\widetilde{KO}_6(C\pr(S^3, \tau^{4,0}))$ and from Theorem~5.1 of \cite{Boer2020}, 
we know that $\eta_5$ has the formula
$$\eta_5[u] = \sm{0}{iu}{-iu^*}{0} \; .$$
From this we immediately get the formula for $x_6$ in Table~\ref{S3unitaryA}.

We also know that $r_{7}[y_1]$ is a generator of $\widetilde{KO}_{-1}(C\pr(S^3, \tau^{4,0}))$ and from Theorem~3.2 of \cite{Boer2020}, 
we know that $r_{7}$ has the formula
$$r_{7}[u] = \sm{a}{ib}{-ib}{a} \; $$
where $a = \tfrac{1}{2}(u + u^\tau)$ and $b = \tfrac{1}{2}(u - u^\tau)$.
From this we get the formula for $x_{7}$ in Table~\ref{S3unitaryA}.

We obtain the formula for $x_1$ in the same way, knowing that $r_1[x_1]$ must be a generator of $\widetilde{KO}_{1}(C\pr(S^3, \tau^{4,0}))$.
The formula for $r_1$ from Theorem~3.2 of \cite{Boer2020} is
$$r_{1}[u] = \sm{a}{ib}{-ib}{a} \; $$
where $a = \tfrac{1}{2}(u + u^{*\tau})$ and $b = \tfrac{1}{2}(u - u^{*\tau})$.
\end{proof}

\begin{table}[h]
\caption{Unitaries for $\widetilde{K}\crr(C\pr(S^3, \tau^{3,1}))$.} \label{S3unitaryB}
$$\begin{array}{|c|c|c|}  \hline
 (x,y,z, w) \mapsto (x,y,z, -w)  & \text{isomorphism class} & { \text{unitary representing a generator}}  \\ \hline \hline
 \widetilde{KU}_0 & 0 &  \\ \hline
\widetilde{KU}_1 & \Z &   y_1 = \sm{x + iz}{y + iw}{y - iw}{-x + iz}		       \\ \hline \hline
\widetilde{KO}_{-1}  & \Z & 	x_{-1} = \sm{x + iz}{y + iw}{y - iw}{-x + iz}						\\ \hline 
\widetilde{KO}_0 & \Z_2 &  x_0 = \left( \begin{smallmatrix}
			x & y + iw & z & 0 \\ y - iw & -x & 0 & z \\ z & 0 & -x & -y - iw \\ 0 & z & -y + iw & x
		\end{smallmatrix} \right)		\\ \hline  
\widetilde{KO}_1  &  \Z_2 &  	x_1 = 	\left( \begin{smallmatrix}
			x & -y  - iw & z & 0 \\ y - iw & x & 0 & -z \\ z & 0 & -x & y + iw \\ 0 & -z & -y + iw & -x
		\end{smallmatrix} \right)		\\ \hline  
\widetilde{KO}_2  & 0 &  				\\ \hline
\widetilde{KO}_3 & \Z & 	x_3 = \left( \begin{smallmatrix}
		z & w & x & y \\ -w & z & y & -x \\ -x & -y & z & w \\ -y & x & -w & z	
		\end{smallmatrix} \right)	\\ \hline  
\widetilde{KO}_4 & 0 & 				 \\ \hline
\widetilde{KO}_5 & 0 & 				 \\ \hline 
\widetilde{KO}_6  & 0 & 				\\ \hline 
\end{array}$$
\end{table}

\begin{prop}
Table \ref{S3unitaryB} shows unitary representatives of generators for $\widetilde{K}\crr(C\pr(S^3, \tau^{3,1}))$.
\end{prop}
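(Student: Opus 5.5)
I would follow the same pattern as the proofs of Proposition~\ref{S2:1,2} and of Table~\ref{S3unitaryA}. Here $\tau=\tau^{3,1}$ acts by $(x,y,z,w)\mapsto(x,y,z,-w)$, so $x,y,z$ are $\tau$-invariant and $w^\tau=-w$. Also $\widetilde{K}\crr(C\pr(S^3,\tau^{3,1}))\cong\Sigma M$, so the key generator lies in $\widetilde{KO}_{-1}$, and in that degree $c_{-1}$ is an isomorphism onto $\widetilde{KU}_1\cong\Z$ (column $n=0$ of Table~\ref{Table-freemodule}).

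The first step is to take $y_1=y\sigma_1+z\sigma_2'+x\sigma_3+iz I$, a suitable permutation of Pauli matrices with $z$ in the $iI$ slot, so that $y_1=\sm{x+iz}{y+iw}{y-iw}{-x+iz}$. By Proposition~1 of \cite{SBS}, this generates $\widetilde{K}_1(C(S^3))$; permuting or negating coordinates is a homeomorphism, which is harmless, as noted after Table~\ref{S3unitaryA}. Next, set $x_{-1}=y_1$ and check the $KO_{-1}$ symmetry $u\Ttau=u$. Transposing swaps the off-diagonal entries $y+iw$ and $y-iw$, and $\tau$ then sends $w\mapsto -w$ and conjugates $i$, so $(y-iw)^\tau=y+iw$. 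The diagonal entries are fixed, since $(x+iz)^\tau=x+iz$ because $\tau$ on $A\sc$ is $a+ib\mapsto a^*+ib^*$ and fixes $i$ as a formal scalar. I need to be careful with this convention, and I will double-check it against the verification in Proposition~\ref{S2:1,2}. At the fixed point $(1,0,0,0)$, $x_{-1}$ evaluates to $\sigma_3$, which is homotopic to $I$ within symmetric unitaries, so $\ev_*[x_{-1}]=0$. Since $c_{-1}[x_{-1}]=[y_1]$ is a generator and $c_{-1}$ is an isomorphism, $[x_{-1}]$ is the key generator.

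Then $\eta_{-1}[x_{-1}]$ generates $\widetilde{KO}_0\cong\Z_2$ and $\eta_0\eta_{-1}[x_{-1}]$ generates $\widetilde{KO}_1\cong\Z_2$, because $\eta_0$ and $\eta_1$ are surjective in $M$. I would apply the explicit formulas for $\eta_{-1}$ and $\eta_0$ from Theorem~5.1 of \cite{Boer2020} to produce the $4\times4$ matrices $x_0$ and $x_1$. I would then verify $x_0=x_0^*$, $x_0\Ttau=x_0$ and $x_1\Ttau=x_1^*$ directly, and also that both are neutral at the fixed point.

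For $\widetilde{KO}_3\cong\Z$, the $\Sigma$-shift of column $n=4$ gives $r_3\colon\widetilde{KU}_3\to\widetilde{KO}_3$ surjective (and $r$ in that column is multiplication by 1). So I would apply the formula for $r_3$ from Theorem~3.2 of \cite{Boer2020} to $u=y_1$, namely $\sm{a}{ib}{-ib}{a}$ with $a,b=\tfrac12(u\pm u^{*\sharp\otimes\tau})$ or a similar combination, and then conjugate by a permutation or scalar so that the result matches the real-looking $x_3$ of the table. The verification $x_3\stau=x_3$, or the alternative $KO_3$ symmetry, would then follow by direct computation.

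I expect the main obstacle to be bookkeeping rather than concept. I need the exact forms of $\eta_{-1}$, $\eta_0$ and $r_3$, including the choice between the two $KO_{-1}$ and $KO_3$ pictures and the conversion maps from the Appendix of \cite{boersema-spheres1}, so that the computed matrices coincide with those listed in Table~\ref{S3unitaryB} up to operations that preserve the class. Where the output differs, I would argue that it is related to the tabulated matrix by a constant unitary conjugation that respects the symmetry, or by a coordinate homeomorphism.
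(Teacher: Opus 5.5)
Your outline follows the paper's proof step for step. The paper also takes $x_{-1}=y_1$ as the key generator because $c_{-1}$ is an isomorphism. It obtains $x_0$ and $x_1$ from the explicit formulas for $\eta_{-1}$ and $\eta_0$; the latter is just $u\mapsto u\cdot\mathrm{diag}(1,-1,1,-1)$, and the paper also makes a harmless equivariant substitution $z\mapsto -z$ in $x_0$. It obtains $x_3$ by applying $r_3$ to $y_1$.

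One step, however, would fail as you wrote it. The formula you give for $r_3$, with $a,b=\tfrac12(u\pm u^{*(\sharp\otimes\tau)})$, is the formula for $r_5$. Its output satisfies $v^{\sharp\otimes\mathrm{Tr}\otimes\tau}=v^*$, so it lands in $\widetilde{KO}_5(C\pr(S^3,\tau^{3,1}))=0$ rather than $\widetilde{KO}_3$. The correct formula has no adjoint: $a=\tfrac12(u+u^{\sharp\otimes\tau})$ and $b=\tfrac12(u-u^{\sharp\otimes\tau})$. For $u=y_1$ this gives $u^{\sharp\otimes\tau}=\sm{-x+iz}{-y+iw}{-y-iw}{x+iz}$, hence $a=i\sm{z}{w}{-w}{z}$ and $b=\sm{x}{y}{y}{-x}$. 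Then $\sm{a}{ib}{-ib}{a}$ is exactly $i$ times the tabulated $x_3$.

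To remove that factor of $i$ you do not conjugate, since conjugating by a scalar does nothing. Instead, multiply by a path $\theta(t)$ of unit scalars from $1$ to $-i$. This path stays among unitaries satisfying $u^{\sharp\otimes\tau}=u$ because $\sharp\otimes\tau$ is complex-linear, which is exactly the justification the paper gives. The same trick would not be available for a relation involving $u^*$, such as the $KO_1$ relation.

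Two smaller corrections:
\begin{enumerate}
\item The involution $\tau$ on $C(S^3)$ is $f\mapsto f\circ\tau$, which does not conjugate $i$. Your check $x_{-1}^{\mathrm{Tr}\otimes\tau}=x_{-1}$ comes out right only under this convention, so the remark about conjugating $i$ should be dropped.
\item The generator should read $y_1=y\sigma_1+w\sigma_2+x\sigma_3+izI$, with $w$ rather than $z$ on $\sigma_2$. This pairing of the anti-invariant coordinate $w$ with the antisymmetric Pauli matrix is what makes $x_{-1}=y_1$ satisfy the $KO_{-1}$ symmetry.
\end{enumerate}
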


\begin{proof}
The expression for $y_1$ here is the same as for Table~7 (except for rearranging the variables).
We check that $x_{-1}$ satisfies $x_{-1}^\tau = x^\tau$. Then $[x_{-1}]$ is an element of $\widetilde{KO}_{-1}(C\pr(S^3, \tau^{3,1}))$. 
Furthermore, it must be a generator, because $c_{-1}[x_{-1}] = [y_1]$ is a generator of $\widetilde{KU}_{1}(C\pr(S^3, \tau^{3,1}))$. 

We know that $\eta_{-1}[x_{-1}]$ is a generator for $\widetilde{KO}_{0}(C\pr(S^3, \tau^{3,1}))$ and the formula for $\eta_{-1}$ from Theorem~5.1 of \cite{Boer2020} is
$$\eta_{-1}[u] = \sm{a}{ib}{-ib}{-a}$$
where 
$$a = \tfrac{1}{2}(u + u^*) \quad \text{and} \quad b = \tfrac{1}{2}(u - u^*) \; .$$
Using this we get the formula
$$ \left( \begin{smallmatrix}
			x & y + iw & -z & 0 \\ y - iw & -x & 0 & -z \\ -z & 0 & -x & -y - iw \\ 0 & -z & -y + iw & x 
		\end{smallmatrix} \right) \; $$
for a generator of $\widetilde{KO}_{0}(C\pr(S^3, \tau^{3,1})) = \Z$.
For convenience, we replaced the $-z$ with $z$ everywhere in the formula for $x_0$, which gives us the opposite generator.

We also know that $\eta_0[x_0]$ is a generator for $\widetilde{KO}_{1}(C\pr(S^3, \tau^{3,1}))$ and the formula for $\eta_{0}$ is
$\eta_0[u] = u \cdot I_4^{(0)}$ where
$I_4^{(0)} = \diag(1,-1,1,-1)$. Thus we take $x_1 = x_0 \cdot I_4^{(0)}$ to get the formula in the table. 
We can double-check that $x_1^\tau = x_1^*$ to verify that $[x_1]$ is an element of $\widetilde{KO}_{1}(C\pr(S^3, \tau^{3,1}))$.

Finally, we obtain the formula for $x_3$ from $y_1$, using the fact that $r_3$ is an isomorphism in this case. The formula for $r_3$ from Theorem~3.2 of \cite{Boer2020} is
$$r_3[u]  = \left[ \sm{a}{ib}{-ib}{a}  \right]$$
where 
$$a = \tfrac{1}{2} (u + u\stau) \quad \text{and} \quad b = \tfrac{1}{2}(u - u\stau) \; .$$
When we do this, we obtain
$$x_3' =  i \left( \begin{smallmatrix}
		z & w & x & y \\ -w & z & y & -x \\ -x & -y & z & w \\ -y & x & -w & z	
		\end{smallmatrix} \right)	$$ 
which is therefore a representative for 	$\widetilde{KO}_{3}(C\pr(S^3, \tau^{3,1}))$.
But note that we can get rid of the $i$ coefficient for $x_3'$ by letting $\theta(t)$ be a path of unit complex numbers from $1$ to $-i$ and taking
$\theta(t) \cdot x_3'$. The required $KO_3(-)$ relation $u^{\sharp \otimes \tau} = u$ is preserved along this path of unitaries from $x_3'$ to $x_3$, because $\sharp \otimes \tau$ is complex-linear.
\end{proof}

\begin{table}[h]
\caption{Unitaries for $\widetilde{K}\crr(C\pr(S^3, \tau^{2,2}))$.} \label{S3unitaryC}
$$\begin{array}{|c|c|c|}  \hline
 (x,y,z, w) \mapsto (x,y,-z, -w)  & \text{isomorphism class} & { \text{unitary representing a generator}}  \\ \hline \hline
 \widetilde{KU}_0 & 0 &  \\ \hline
\widetilde{KU}_1 & \Z &   y_1 = \sm{x + iw}{y + iz}{y - iz}{-x + iw}		       \\ \hline \hline
\widetilde{KO}_0 & 0 & 				\\ \hline 
\widetilde{KO}_1 &  \Z &  x_1 = \sm{x + iw}{-y - iz}{y - iz}{x - iw}					\\ \hline 
\widetilde{KO}_2 & \Z_2 &  x_2 = \left( \begin{smallmatrix}
			0 & -w + ix & 0 &  z - iy \\ -w -ix & 0 & z - iy & 0 \\ 0 & z+iy & 0 & w + ix \\ z + iy &0 & w - ix & 0
		\end{smallmatrix} \right)		\\ \hline 
\widetilde{KO}_3 & \Z_2 & 		x_3 = \left( \begin{smallmatrix}
			x + iw & -iz & 0 & -iy \\ -iz & x-iw & iy & 0 \\ 0 & iy & x+iw & -iz \\ -iy & 0 & -iz & x - iw 
		\end{smallmatrix} \right)		\\ \hline 
\widetilde{KO}_4 & 0 & 				 \\ \hline
\widetilde{KO}_5 & \Z & 		x_5 
		= \left( \begin{smallmatrix}
	x & -y & -w & z \\ y & x & z & w \\ w & -z & x & -y \\ -z & -w & y & x
		\end{smallmatrix} \right) 	\\ \hline 
\widetilde{KO}_6 & 0 & 				\\ \hline 
\widetilde{KO}_7 & 0 & 						\\ \hline 
\end{array}$$
\end{table}

\begin{prop}
Table \ref{S3unitaryC} shows unitary representatives of generators for $\widetilde{K}\crr(C\pr(S^3, \tau^{2,2}))$.
\end{prop}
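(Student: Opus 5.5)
The plan follows the pattern of the previous propositions: start from the complex generator, identify a real representative of the key generator, and then generate the remaining classes with the explicit natural transformations of \cite{Boer2020}.

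\textbf{Key generator.} The involution is $\tau = \tau^{2,2}$, given by $(x,y,z,w)\mapsto(x,y,-z,-w)$, so $z^\tau=-z$, $w^\tau=-w$, and $x,y$ are fixed.

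First, $y_1$ is the Schulz-Baldes--Stoiber unitary built from the Pauli matrices (variables rearranged), so $[y_1]$ generates $\widetilde{KU}_1 = K_1(C(S^3)) \cong \Z$ by Proposition~1 of \cite{SBS}.

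Since $a-b-1=-1$, the key generator lies in $\widetilde{KO}_1$. Here $x_1$ is obtained from $y_1$ by multiplying the second column by $-1$, i.e.\ $x_1 = y_1\,\diag(1,-1)$. By the remark after Table~\ref{S3unitaryA}, right multiplication by a constant unitary does not change the $K_1$-class in the complex picture, so $c_1[x_1]=[y_1]$.

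I will check directly that $x_1^{\rm Tr\otimes\tau} = x_1^*$. Transposing and applying $\tau$ entrywise (which conjugates $i$ and sends $z\mapsto -z$, $w\mapsto -w$) gives $\sm{x+iw}{y-iz}{-y-iz}{x-iw}$, which equals $x_1^*$. So $x_1$ is a genuine real $2\times 2$ unitary over $C\pr(S^3,\tau)$. At the fixed point $(1,0,0,0)$ it evaluates to $I$, so $[x_1]\in\ker\ev_*$.

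Because $c_1$ is an isomorphism for this free $\mathcal{CR}$-module (the $n=0$ column of Table~\ref{Table-freemodule}, after suspension by $1$), $[x_1]$ generates $\widetilde{KO}_1\cong\Z$.

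\textbf{Torsion classes.} From the column structure, $\eta_1$ and $\eta_2$ are surjective, so $x_2$ and $x_3$ should be $\eta_1[x_1]$ and $\eta_2[x_2]$.

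For $x_2$, I would apply the Theorem~5.1 formula for $\eta_1$ from \cite{Boer2020} to $x_1$. I expect to obtain the table's $x_2$ possibly only after conjugating by a constant permutation or sign matrix compatible with the $KO_2$ symmetry. Failing that, I will verify directly that $x_2=x_2^*$, $x_2^{\rm Tr\otimes\tau}=-x_2$, and that $x_2$ evaluates at the fixed point to something homotopic to $I^{(2)}$.

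For $x_3$, I would apply the $\eta_2$ formula $\sm{a}{b}{c}{d}\mapsto\sm{ic}{d}{a}{-ib}$ to $x_2$ written in $2\times2$ blocks. Then I will confirm the relation $x_3^{\sharp\otimes\tau}=x_3$ directly.

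\textbf{Degree 5.} Here $r_5\colon \widetilde{KU}_5\to\widetilde{KO}_5$ corresponds to $r_4$ in the unshifted free module, which is surjective. Multiplication by $1$ from $\Z$ onto $\Z$ is an isomorphism. So I will apply the Theorem~3.2 formula
$$r_5[u]=\sm{a}{ib}{-ib}{a}, \qquad a=\tfrac12(u+u^{*\sharp\otimes\tau}),\quad b=\tfrac12(u-u^{*\sharp\otimes\tau}),$$
to $u=y_1$. Then, as in the treatment of $x_3'$ in the $\tau^{3,1}$ case, I will remove scalar factors of $i$ along a path of complex scalars, and possibly reorder the basis, to arrive at the real-entried $4\times4$ matrix $x_5$.

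\textbf{Main obstacle.} The delicate step is bookkeeping: the outputs of the $\eta$ and $r$ formulas may differ from the table entries by constant conjugations, sign changes of variables, or scalar factors. I must make sure each such modification preserves the relevant symmetry $\mathscr S_i$ and the $KO$-class. Sign changes of a variable are harmless, since they come from self-homeomorphisms of $(S^3,\tau)$ commuting with $\tau$. This matters most in the $\Z_2$ degrees, where no complexification check is available and correctness rests entirely on the $\eta$-surjectivity argument.
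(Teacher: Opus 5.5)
Your treatment of the key generator $x_1$ and of $x_2=\eta_1[x_1]$ matches the paper; the $\eta_1$ formula applied to $x_1$ reproduces the table's $x_2$ exactly. The problems are in the $x_3$ and $x_5$ steps.

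\textbf{The $x_3$ step.} Applying $\sm{a}{b}{c}{d}\mapsto\sm{ic}{d}{a}{-ib}$ to $x_2$ does give a representative of the nonzero class in $\widetilde{KO}_3\cong\Z_2$. But that matrix has all diagonal entries zero, so it is not the table's $x_3$, whose diagonal is $(x+iw,\,x-iw,\,x+iw,\,x-iw)$. (That formula also puts $\sharp$ on the outer $2\times2$ block structure. For $4\times4$ matrices its output satisfies $u^{\sharp\otimes\tau}=u$ in the paper's inner-block convention only after swapping the second and third basis vectors.) Your fallback, checking $x_3^{\sharp\otimes\tau}=x_3$ for the table's matrix, only shows that it defines some class in $\Z_2$, possibly zero. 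In this torsion degree nothing else in your plan detects nontriviality, so the claim about the table's $x_3$ is left unproved. The paper instead uses $r_3$ on the complex generator; $r_3$ is surjective onto $\widetilde{KO}_3$ (the entry $r_2=1$ in Table~\ref{Table-freemodule}, shifted by one). Concretely, apply it to $x_1$ regarded as a complex unitary, which is legitimate since $[x_1]=[y_1]$ in $KU_1$. This gives $a=\sm{x+iw}{-iz}{-iz}{x-iw}$ and $b=\sm{0}{-y}{y}{0}$, and $\sm{a}{ib}{-ib}{a}$ is exactly the table's $x_3$.

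\textbf{The $x_5$ step fails as stated.} The $KO_5$ relation $u^{\sharp\otimes\tau}=u^*$ pairs the complex-linear map $\sharp\otimes\tau$ with the conjugate-linear map $*$. Hence $\lambda u$ satisfies it only for $\lambda=\pm1$. The scalar-path trick from the $\tau^{3,1}$ case worked only because the $KO_3$ relation $u^{\sharp\otimes\tau}=u$ is complex-linear. Indeed $r_5[y_1]=iM$ with $M$ real, and $M^{\sharp\otimes\tau}=-M^*$, so $M$ is not a $KO_5$ unitary at all (nor is it the table's $x_5$). The fix is again to apply $r_5$ to $x_1$. This gives $a=\sm{x}{-y}{y}{x}$ and $ib=\sm{-w}{z}{z}{w}$, and $\sm{a}{ib}{-ib}{a}$ is exactly the table's real matrix $x_5$. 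It is a generator because $r_5$ is an isomorphism here. Alternatively, left multiplication of $iM$ by the constant symplectic unitary $\mathrm{diag}(i,-i,i,-i)$ stays in the $KO_5$ class and gives a real representative, though not the table's.

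\textbf{A smaller slip in your check of $x_1$.} The same linearity confusion appears there. The involution acts by $f\mapsto f\circ\tau$ and does not conjugate $i$. The matrix you display is just the transpose of $x_1$, which is not $x_1^*$. Done correctly, $\tau$ applied to the transpose gives $\sm{x-iw}{y+iz}{-y+iz}{x+iw}=x_1^*$, so your conclusion there does hold.
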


\begin{proof}[Sketch of Proof]
The choice of $y_1$ is same as in previous examples, up to rearranging and sign changes of variables.

	
The involution is given by $(x,y,z,w)^\tau = (x,y,-z-,-w)$. We take $x_1 = y_1$ and check that $x_1^\tau = x_1^*$. 
We also check that 
$$\text{ $\ev_*([x_1]) = [\diag(1, 1) ]   = 0$ in $KO_1(\R)$. }$$
where $\ev$ is evaluation at the point $(1,0,0,0) \in S^3$.
This implies that  $[x_1] \in \ker (\ev_*)$ and hence that $[x_1] \in \widetilde{KO}_1(C\pr(S^3,\tau^{2,2}))$.
Now note that 
$x_1 = y_1 \cdot \diag(1, -1)$.
Thus
$$c_1([x_1]) = [x_1] = [y_1 \cdot \diag(1,-1)] = [y_1] \; .$$
Since $c_1$ is an isomorphism for this $C \sp *$-algebra, it follows that $[x_1]$ must be a generator of 
$\widetilde{KO}_1(C\pr(S^3,\tau^{2,2}))$, using the same reasoning as in the previous cases.

Now to obtain a formula for $x_2$, we use the fact that $\eta_1$ carries a generator of $\widetilde{KO}_1(C\pr(S^3, \tau^{2,2}))$ to the non-trivial element of
$\widetilde{KO}_2(C\pr(S^3, \tau^{2,2}))$. The formula for $\eta_1$ from Theorem~5.1 of \cite{Boer2020} is
$$\eta_1[u] = X \sm{0}{iu}{-iu^*}{0} X^*$$
where $X$ is a unitary such that conjugation by $X$ swaps the second and third rows and columns of a $2 \times 2$ matrix. Applying this to $x_1$ then gives the formula for $x_2$ in Table~\ref{S3unitaryC}.
			
To obtain $x_3$ we use the fact that $r_3[y_1]$ must represent the non-trivial element of $\widetilde{KO}_3(C\pr(S^3, \tau^{2,2}))$. By Theorem~1 of \cite{Boer2020}, we have
$$r_3[u] = \sm{a}{ib}{-ib}{a}$$
where 
$$a = \tfrac{1}{2}\left( u + u\stau \right) \quad \text{and} \quad b = \tfrac{1}{2}\left( u - u\stau \right) \; .$$	
This gives us $x_3$ as shown.

Finally, to obtain $x_5$ we also use the fact that $r_5[y_1]$ must represent a generator of $\widetilde{KO}_3(C\pr(S^3, \tau^{2,2}))$. This formula for $r_5$ from \cite{Boer2020} is
$$r_5[u] = \sm{a}{ib}{-ib}{a}$$
where 
$$a = \tfrac{1}{2}\left( u + u^{*(\sharp \otimes \tau)} \right) \quad \text{and} \quad b = \tfrac{1}{2}\left( u - u^{*(\sharp \otimes \tau)} \right) \; .$$	

\end{proof}

\vspace{1cm}
\begin{prop}
Table \ref{S3unitaryD} shows unitary representatives of generators for $\widetilde{K}\crr(C\pr(S^3, \tau^{1,3}))$.
\end{prop}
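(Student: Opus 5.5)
The plan follows the template of the previous three propositions. The involution $\tau = \tau^{1,3}$ is $(x,y,z,w) \mapsto (x,-y,-z,-w)$. Since $a-b-1 = -3$, the reduced group is $\widetilde{KO}_*(C\pr(S^3,\tau^{1,3})) \cong \Sigma^{-3}KO_*(\R)$. The key generator therefore lies in degree $3$, and the nonzero reduced groups are $\widetilde{KO}_3 = \Z$, $\widetilde{KO}_4 = \Z_2$, $\widetilde{KO}_5 = \Z_2$ and $\widetilde{KO}_7 = \Z$.

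First, take the complex generator $y_1 = y\sigma_1 + x\sigma_2 + z\sigma_3 + w\, iI_2$ of $\widetilde{K}_1(C(S^3))$ from Proposition~1 of \cite{SBS}, with the variables permuted and signed as convenient. The key generator must satisfy the $KO_3$ symmetry. I would try the second version in Table~\ref{unitaryTable}, namely $u\Ttau = -u$ with neutral element $\sm{0}{\1}{-\1}{0}$. The reason is that the variables negated by $\tau$ multiply Pauli matrices whose transposes pick up signs. Concretely, I would look for a $2\times 2$ unitary of the form $x_3 = i\,y_1$, or $y_1$ times a fixed scalar, with the variables arranged so that $x_3\Ttau = -x_3$.

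If no $2\times2$ arrangement works, I would fall back on the $KO_3$ picture with $u\sTtau = u$ in $M_2$. By analogy with $x_5$ in Table~\ref{S3unitaryA}, I would try a quaternion-like matrix $\sm{p}{q}{-\bar q}{\bar p}$ with $p,q$ built from $x+iy$ and $z+iw$. Then $\sharp\otimes\tau$ acts by the coordinate flip together with the adjugate.

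Next I check two things. The evaluation at the fixed point $(1,0,0,0)$ must give the neutral element, possibly after multiplying by a constant unitary or by $\diag(1,-1)$ as in the $\tau^{2,2}$ case. And $c_3[x_3]$ must equal $\pm[y_1]$ up to a constant unitary factor. Since the free $\mathcal{CR}$-module structure in Table~\ref{Table-freemodule} (column $n=0$, shifted by $3$) makes $c_3$ an isomorphism, it follows that $[x_3]$ is a generator.

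The remaining generators come from the natural transformations, using the formulas of \cite{Boer2020}:
\begin{itemize}
\item $x_4 = \eta_3(x_3)$ and $x_5 = \eta_4(x_4)$, since $\eta$ is surjective in columns $0$ and $1$.
\item $x_7 = r_7[y_1]$, using $r_7 = r_{-1}$ with $a = \frac12(u+u^\tau)$ and $b=\frac12(u-u^\tau)$; in column $4$, $r$ is surjective and hence an isomorphism here.
\end{itemize}
Because the $\eta_3$ formula in Theorem~5.1 of \cite{Boer2020} is stated for the original $KO_3$ picture, I may need the conversion formulas from the Appendix of \cite{boersema-spheres1} to move $x_3$ between the two $KO_3$ models before applying $\eta_3$.

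The main obstacle is the first step: locating a low-size matrix in the correct $KO_3$ model that has the right symmetry, is neutral at the base point, and complexifies to $y_1$. Bookkeeping between the two $KO_3$ pictures, and then into $\eta_3$, is the secondary difficulty. The other steps are routine verification of symmetries ($x_4 = x_4^*$ with $x_4\sTtau = x_4$, $x_5\sTtau = x_5^*$, $x_7\Ttau = x_7$).
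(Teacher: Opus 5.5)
Your overall plan matches the paper's. You certify the key generator in $\widetilde{KO}_3$ through the isomorphism $c_3$, then take $x_4=\eta_3[x_3]$ and $x_7=r_7[y_1]$.

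\textbf{Coordinate convention.} You have misread the involution. Table~\ref{S3unitaryD} uses $(x,y,z,w)\mapsto(-x,-y,-z,w)$. So $w$ is the fixed coordinate, the fixed points are $(0,0,0,\pm1)$, and $(1,0,0,0)$ is not fixed. The statement concerns the specific matrices in the table, so you must work in the table's coordinates. In your coordinates the table's $x_3$ satisfies neither $KO_3$ symmetry. As written, your plan would therefore produce \emph{some} representatives rather than verify the listed ones.

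\textbf{The ``main obstacle'' disappears.} With the correct involution, the table's $x_3$ is literally $y_1$. From $\sm{a}{b}{c}{d}^{\sharp\otimes\tau}=\sm{d^\tau}{-b^\tau}{-c^\tau}{a^\tau}$ one checks at once that $x_3^{\sharp\otimes\tau}=x_3$. The paper therefore stays in the first $KO_3$ picture. It gets $c_3[x_3]=[y_1]$ for free and applies the $\eta_3$ formula with no conversion between pictures. No base-point check is needed either: $KO_3(\R)=0$, and the other classes are images of $x_3$ or $y_1$ under natural transformations.

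\textbf{Your heuristic for the second picture is reversed.} For $u^{\mathrm{Tr}\otimes\tau}=-u$, the three negated coordinates must multiply the \emph{symmetric} matrices $\sigma_1,\sigma_3,iI$, and the fixed coordinate must multiply the antisymmetric $\sigma_2$. Read literally, your version would suggest that no $2\times 2$ arrangement exists. One does exist, namely $x_3J=x\sigma_1+w\sigma_2-y\sigma_3-z\,iI$ with $J=\sm{0}{1}{-1}{0}$. The reason is that $v^\sharp=Jv^{\mathrm{Tr}}J^{-1}$ on $M_2(\C)$. Hence right multiplication by $J$ converts $u^{\sharp\otimes\tau}=u$ into $u^{\mathrm{Tr}\otimes\tau}=-u$ and sends $I_2$ to $J$.

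\textbf{The route to $x_5$.} The paper obtains $x_5$ as $r_5[y_1]$, using that $r_5$ is surjective in column $2$. Your choice $\eta_4[x_4]$ is equally valid, because $\eta_4\colon\Z_2\to\Z_2$ is an isomorphism. In fact the tabulated $x_5$ equals $\mathrm{diag}(1,1,-1,-1)\,x_4$.
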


\begin{table}[h]
\caption{Unitaries for $\widetilde{K}\crr(C\pr(S^3, \tau^{1,3}))$.} \label{S3unitaryD}
$$\begin{array}{|c|c|c|}  \hline
 (x,y,z, w) \mapsto (-x,-y, -z, w)  & \text{isomorphism class} & { \text{unitary representing a generator}}  \\ \hline \hline
 \widetilde{KU}_0 & 0 &  \\ \hline
 \widetilde{KU}_1 & \Z &   y_1 = \sm{x + iw}{y + iz}{y - iz}{-x + iw}       \\ \hline \hline
\widetilde{KO}_0 & 0 & 				\\ \hline 
\widetilde{KO}_1 &  0 &  				\\ \hline 
\widetilde{KO}_2& 0 &  				\\ \hline
\widetilde{KO}_3 & \Z & 	x_3 = \sm{x + iw}{y + iz}{y - iz}{-x + iw} 			 \\ \hline 
\widetilde{KO}_4 & \Z_2 & x_4 = 
		\left( \begin{smallmatrix}
			x  & y+ iz & w & 0 \\  y - iz & -x & 0 & w \\  w & 0 & -x & - y - iz \\ 0 & w & -y + iz & x					
	\end{smallmatrix} \right)  			 \\ \hline
\widetilde{KO}_5& \Z_2 & x_5 = 	
		\left( \begin{smallmatrix}
			x  & y+ iz & w & 0 \\  y - iz & -x & 0 & w \\  -w & 0 & x & y+ iz \\ 0 & -w & y - iz & -x				
	\end{smallmatrix} \right)  			 \\ \hline
\widetilde{KO}_6 & 0 & 		
									\\ \hline 
\widetilde{KO}_7 & \Z & x_7 =
		 i  \left( \begin{smallmatrix}
			w & z & x & y \\ -z & w & y & -x \\ -x & -y & w & z \\ -y & x & -z & w				
	\end{smallmatrix} \right) 		\\ \hline 
\end{array}$$
\end{table}

\vspace{1cm}

\begin{proof}
We use the same $y_1$ in earlier examples.

Check that $x_3$ satisfies $x_3^{\sharp \otimes \tau} = x_3$. Then $[x_3]$ must represent the generator of $\widetilde{K}\crr(C\pr(S^3, \tau^{1,3}))$
since $c_3$ is an isomorphism.

Using the fact that $\eta_3$ is surjective, we find a formula for $x_4$ from the expression $[x_4] = \eta_3[x_3]$.
The formula for $\eta_3$ is
$$\eta_3[u] = \sm{a}{ib}{ib}{-a}$$
where 
$$a = \tfrac{1}{2}\left( u + u^* \right) \quad \text{and} \quad b = \tfrac{1}{2}\left( u - u^* \right) \; .$$	

We find $x_5$ using the fact that $r_5$ is surjective and the expression $r_5([y_1]) = [x_5]$. The formula for $r_5$ is
$$r_5[u] = \sm{a}{ib}{-ib}{a}$$
where 
$$a = \tfrac{1}{2}\left( u + u^{*(\sharp \otimes \tau)} \right) \quad \text{and} \quad b = \tfrac{1}{2}\left( u - u^{*(\sharp \otimes \tau)} \right) \; .$$
We apply this to $y_1$ to find $x_5$.	

Finally, we also find $x_7$ using $r_7$ which is an isomorphism. The formula for $r_7$ is
$$r_7[u] = \sm{a}{ib}{-ib}{a}$$
where 
$$a = \tfrac{1}{2}\left( u + u^\tau \right) \quad \text{and} \quad b = \tfrac{1}{2}\left( u - u^\tau \right) \; .$$
\end{proof}

\vspace{2cm}


\section{$K$-Theory Generators in Higher Dimensions} \label{section-highdim}

Now, we consider the general case of the spheres $(S^d, \tau^{a,b})$ with $a \geq 1$ and no restriction on $d$. We will describe general algorithms for finding the key generator
$$g_{a,b} \in KO_{b-a+1}(C\pr(S^d, \tau^{a,b})) = \Z$$
in each case.
Prior to the general case, we consider two special cases of involutions of the form $\tau^{a,0}$ and $\tau^{1, b}$ in Subsections~\ref{sub1} and \ref{sub2}. The most general situation $\tau^{a,b}$ is addressed in Subsection~\ref{sub3}.

For these general recipes for producing generators of $K\crr(C\pr(S^d, \tau^{a,b}))$ we introduce the so-called $k$-Clifford generators, 
as used in \cite{SBS} and in \cite{boersema-spheres1}.

\begin{defn}
Let $k$ be a positive odd integer and let  $n = 2^{(k-1)/2}$. A list of self-adjoint matrices $a_1, \dots, a_k \in M_n(\C)$ is called a list of ``complex $k$-Clifford generators" if they satisfy 
$$a_i a_j + a_j a_i = 2 \delta_{i,j} \; $$
for all $i,j$. These elements form irreducible representations of the real Clifford algebras $\mathcal{C}_k\pr$.
\end{defn}

\begin{const}  \label{construction0}
For our purposes, and following Section 2 of \cite{SBS}, we declare 
the following to be the ``standard" list of complex self-adjoint complex $k$-Clifford generators in $M_n(\C)$ constructed inductively:
\begin{itemize} 
\item For $k = 1$, we have $\Gamma_{1,1} = 1$.
\item For $k = 3$, we have 
$$\Gamma_{1,3} = \begin{bmatrix} 0 & 1 \\ 1 & 0 \end{bmatrix} \; , \quad
\Gamma_{2,3} =  \begin{bmatrix} 0 & i \\ -i & 0 \end{bmatrix} \; , \quad
\Gamma_{3,3} =  \begin{bmatrix} 1 & 0 \\ 0 & -1 \end{bmatrix} \; .$$
\item For $k \geq 3$, let $\Gamma_{1,k}, \dots, \Gamma_{k,k}$ be the standard list of complex self-adjoint $k$-Clifford generators in $M_n(\C)$, and then define $\Gamma_{1,k+2}, \dots, \Gamma_{k+2,k+2} \in M_{2n}(\C)$ by
\begin{align*}
\Gamma_{i, k+2} &= \begin{bmatrix} 0 & \Gamma_{i, k} \\ \Gamma_{i,k} & 0 \end{bmatrix} && i = 1, \dots, k, \\
\Gamma_{k+1,k+2} &=  \begin{bmatrix} 0 & i I_{n} \\ -i I_{n} & 0 \end{bmatrix} \\
\Gamma_{k+2, k+2} &=  \begin{bmatrix} I_{n} & 0 \\ 0 & -I_{n}  \end{bmatrix} \\
\end{align*}
\end{itemize}
\end{const}

\begin{lemma}
For positive odd integers $k$, the standard list of complex self-adjoint $k$-Clifford generators satisfies
$$
(\Gamma_{i,k})\T = (-1)^{i+1} \Gamma_{i,k} 
\quad \text{and} \quad
(\Gamma_{i,k})\sT = 
\begin{cases} 
-\Gamma_{i,k} & i \leq 3 \\
(-1)^{i+1} \Gamma_{i,k} & \rm{otherwise.}
\end{cases} $$
\end{lemma}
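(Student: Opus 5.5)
The plan is to argue by induction on the odd integer $k$, following the inductive shape of Construction~\ref{construction0}. Here $\T$ denotes the ordinary transpose. Throughout the $\sT$ part I take $k \geq 3$: the involution $\sharp \otimes {\rm Tr}$ only makes sense on $M_{2m}(\C)$, and the size $n = 2^{(k-1)/2}$ is even exactly when $k \geq 3$. For $k=1$ only the transpose statement is meaningful, and it is trivial since $\Gamma_{1,1} = 1$ and $(-1)^{1+1} = 1$.

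\emph{Base case $k=3$.} I would check the three Pauli-type matrices directly.
\begin{itemize}
\item Transpose: $\Gamma_{1,3}$ and $\Gamma_{3,3}$ are real symmetric, so they are fixed by $\T$. $\Gamma_{2,3}$ is antisymmetric, so $(\Gamma_{2,3})\T = -\Gamma_{2,3}$. This is the sign pattern $(-1)^{i+1}$.
\item Sharp: here $n=2$, so $\sharp \otimes {\rm Tr}$ is just $\sharp$ applied to a single $2\times 2$ block. From $\sm{a}{b}{c}{d}^\sharp = \sm{d}{-b}{-c}{a}$ one gets $\Gamma_{1,3}^\sharp = -\Gamma_{1,3}$ and $\Gamma_{2,3}^\sharp = -\Gamma_{2,3}$, because the diagonal is zero and the off-diagonal entries change sign. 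Also $\Gamma_{3,3}^\sharp = \diag(-1,1) = -\Gamma_{3,3}$. This gives the case $i \leq 3$.
\end{itemize}

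\emph{Inductive step, key observation.} Write a matrix in $M_{2n}(\C)$ in $n\times n$ block form $\sm{P}{Q}{R}{S}$. The transpose is $\sm{P\T}{R\T}{Q\T}{S\T}$. Since $n$ is even, each of $P,Q,R,S$ is itself an array of $2\times 2$ blocks. By the chosen identification $M_2(\C)\otimes M_{2n/2}(\C) \cong M_{2n}(\C)$, which transposes the array of $2\times2$ blocks and applies $\sharp$ to each block, we likewise get
$$\sm{P}{Q}{R}{S}\sT = \sm{P\sT}{R\sT}{Q\sT}{S\sT},$$
with $\sT$ on the right computed at size $n$. Verifying that this block formula is compatible with the fixed index convention is the only point requiring any care, and I expect it to be the main (though mild) obstacle. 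It amounts to noting that the $2\times 2$ block in position $(p,q)$ of the large matrix lies in exactly one of $P,Q,R,S$, and block-transposition of the whole array swaps $Q$ and $R$ while transposing within each.

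\emph{Inductive step, the three families.} With this formula, for $i \leq k$ we get $\Gamma_{i,k+2} = \sm{0}{\Gamma_{i,k}}{\Gamma_{i,k}}{0}$ mapped to $\sm{0}{\Gamma_{i,k}'}{\Gamma_{i,k}'}{0}$, where $'$ is either involution. So the sign is inherited from the induction hypothesis, and the index $i$ is unchanged. Both formulas (the $i\le3$ exception and $(-1)^{i+1}$) therefore propagate. For the new generators:
\begin{itemize}
\item $I_n\T = I_n$ and $I_n\sT = I_n$, since $I_2^\sharp = I_2$. Hence $\Gamma_{k+1,k+2} = \sm{0}{iI}{-iI}{0}$ maps to $\sm{0}{-iI}{iI}{0} = -\Gamma_{k+1,k+2}$ under both involutions. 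This agrees with $(-1)^{k+2} = -1$, as $k$ is odd.
\item $\Gamma_{k+2,k+2} = \diag(I_n,-I_n)$ is fixed by both involutions. This agrees with $(-1)^{k+3} = +1$.
\end{itemize}
Since $k+1, k+2 \geq 4$, these fall under the ``otherwise'' case of $\sT$, which completes the induction.
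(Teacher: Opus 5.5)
Your proposal is correct and is essentially the paper's own proof: the paper says only ``check this directly for $k=3$ and then proceed by induction.'' You fill in the details it leaves implicit, namely the block identity $\sm{P}{Q}{R}{S}\sT = \sm{P\sT}{R\sT}{Q\sT}{S\sT}$ (valid because $n = 2^{(k-1)/2}$ is even for $k \geq 3$) and the sign checks for the two new generators $\Gamma_{k+1,k+2}$ and $\Gamma_{k+2,k+2}$, which fall under the ``otherwise'' case, together with your fair caveat that the $\sT$ assertion is only meaningful for $k \geq 3$.
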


\begin{proof}
Check this directly for $k = 3$ and then proceed by induction.
\end{proof}

The following lemma allows us to modify any set of $k$-Clifford generators to toggle the behavior of the transpose operator or the sharp operator, in groups of four. As we will see the significance of this is closely related to the period-8 periodicity of real $K$-theory.

\begin{lemma} \label{toggle}
Let $\Sigma = \{\sigma_1, \sigma_2, \dots, \sigma_n\}$ be any set of $k$-Clifford generators, and let 
$S = \{\sigma_{i_1}, \dots, \sigma_{i_k}\}$ be an (ordered) subset of these generators where $|S| = k \equiv 0 \pmod 4$.
Define $\widetilde{\sigma}_{i_1}, \dots, \widetilde{\sigma}_{i_k}$ by
$$
	\widetilde{\sigma}_{i_j} = i \sigma_{i_1} \cdots \widehat{\sigma}_{i_j} \cdots \sigma_{i_k} \; .
$$
Then the set $\widetilde \Sigma$ obtained from $\Sigma$ by replacing each $\sigma_{i_j}$ with $\widetilde{\sigma}_{i_j}$ is also a set of $k$-Clifford generators.
Furthermore, 
\begin{enumerate}
\item If $\sigma_{i_j}\T = \sigma_{i_j}$ for all $j$, then $\widetilde{\sigma}_{i_j}\T = -\widetilde{\sigma}_{i_j}$ for all $j$.
\item If $\sigma_{i_j}\T = -\sigma_{i_j}$ for all $j$, then $\widetilde{\sigma}_{i_j}\T = \widetilde{\sigma}_{i_j}$ for all $j$.
\item If $\sigma_{i_j}\sT = \sigma_{i_j}$ for all $j$, then $\widetilde{\sigma}_{i_j}\sT = -\widetilde{\sigma}_{i_j}$ for all $j$.
\item If $\sigma_{i_j}\sT = -\sigma_{i_j}$ for all $j$, then $\widetilde{\sigma}_{i_j}\sT = \widetilde{\sigma}_{i_j}$ for all $j$.
\end{enumerate}
	 \end{lemma}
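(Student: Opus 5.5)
The plan is to verify everything by direct computation with products of anticommuting self-adjoint unitaries, using only the Clifford relations and the fact that both $\rm{Tr}$ and $\sharp \otimes \rm{Tr}$ are complex-linear antimultiplicative involutions on the matrix algebra. Write $P = \sigma_{i_1}\cdots\sigma_{i_k}$ with $k \equiv 0 \pmod 4$, and note that $\widetilde{\sigma}_{i_j} = i\,\sigma_{i_1}\cdots\widehat{\sigma}_{i_j}\cdots\sigma_{i_k}$ equals, up to the sign $(-1)^{k-j}$, the element $i P \sigma_{i_j}$. This follows by moving $\sigma_{i_j}$ to the end of $P$, which costs $k-j$ transpositions, and using $\sigma_{i_j}^2 = 1$.

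First I establish the Clifford relations for $\widetilde\Sigma$.
\begin{itemize}
\item $P$ anticommutes with each $\sigma_{i_j}$, since $k$ is even: passing $\sigma_{i_j}$ through $P$ gives $k-1$ sign changes.
\item $P$ commutes with every generator outside $S$, since passing such a generator through $P$ gives $k$ sign changes.
\item $P^* = P$ and $P^2 = 1$. Indeed $P^* = \sigma_{i_k}\cdots\sigma_{i_1} = (-1)^{k(k-1)/2}P = P$ because $k(k-1)/2$ is even when $k \equiv 0 \pmod 4$, and then $P^2 = PP^* = 1$.
\end{itemize}
Next I check each $\widetilde\sigma_{i_j}$ is a self-adjoint unitary. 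We have $\widetilde\sigma_{i_j}^* = \pm(-i)\sigma_{i_j}P = \pm(-i)(-P\sigma_{i_j}) = \widetilde\sigma_{i_j}$, and $(iP\sigma)^2 = -P\sigma P\sigma = P^2\sigma^2 = 1$.

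For $j \neq l$, $\widetilde\sigma_{i_j}\widetilde\sigma_{i_l} = \pm(-1)P\sigma_{i_j}P\sigma_{i_l} = \pm\sigma_{i_j}\sigma_{i_l}$, and the signs are symmetric in $j,l$. So these products anticommute because $\sigma_{i_j}$ and $\sigma_{i_l}$ do. Finally, a generator outside $S$ commutes with $P$ and anticommutes with $\sigma_{i_j}$, hence anticommutes with $\widetilde\sigma_{i_j}$.

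For items (1)--(4), let $\theta$ denote either $\rm{Tr}$ or $\sharp\otimes\rm{Tr}$, and suppose $\sigma_{i_j}^\theta = \epsilon\,\sigma_{i_j}$ for all $j$, with a fixed $\epsilon = \pm 1$. Since $\theta$ is complex-linear and antimultiplicative,
$$\widetilde\sigma_{i_j}^\theta = i\,\epsilon^{k-1}\,\sigma_{i_k}\cdots\widehat{\sigma}_{i_j}\cdots\sigma_{i_1}.$$
Reversing a product of $k-1$ mutually anticommuting factors costs the sign $(-1)^{(k-1)(k-2)/2}$. When $k \equiv 0 \pmod 4$, $(k-1)(k-2)/2$ is odd, so this sign is $-1$. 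Also $\epsilon^{k-1} = \epsilon$. Hence $\widetilde\sigma_{i_j}^\theta = -\epsilon\,\widetilde\sigma_{i_j}$, which gives all four items at once.

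The main obstacle is sign bookkeeping. The condition $k \equiv 0 \pmod 4$ must be used precisely twice: once to make $P$ self-adjoint, and once to make the reversal sign for $k-1$ factors equal to $-1$. I would double-check both parity computations on $k = 4$ as a test case.
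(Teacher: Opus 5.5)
Your argument is correct. It differs from the paper mainly in that the paper gives no argument here: its proof is a one-line appeal to Lemmas~4.4 and~4.5 of \cite{boersema-spheres1}. Your version is a self-contained replacement for that citation.

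The identity $\widetilde{\sigma}_{i_j} = (-1)^{k-j}\, i P \sigma_{i_j}$, with $P = \sigma_{i_1}\cdots\sigma_{i_k}$, makes the Clifford relations for $\widetilde\Sigma$ transparent. They follow from $P^*=P$, $P^2=1$, and the fact that $P$ anticommutes with the members of $S$ and commutes with the rest. For items (1)--(4) you treat $\mathrm{Tr}$ and $\sharp\otimes\mathrm{Tr}$ uniformly as complex-linear antimultiplicative maps. All four items then come from the single reversal sign $(-1)^{(k-1)(k-2)/2}=-1$, giving $\widetilde\sigma_{i_j}^\theta = -\epsilon\,\widetilde\sigma_{i_j}$.

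The citation buys brevity. Your route buys independence from the earlier paper and isolates exactly which properties are used. It therefore applies verbatim to any complex-linear antimultiplicative map acting as a fixed sign $\epsilon$ on all of $S$.

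One small quibble with your closing remark: the hypothesis on $k$ enters more than twice. Evenness of $k$ is also what makes $P$ anticommute with each $\sigma_{i_j}$ and commute with the generators outside $S$, and what gives $\epsilon^{k-1}=\epsilon$. This does not affect the validity of the proof.
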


\begin{proof}
This follows immediately from Lemmas~4.4 and 4.5 from \cite{boersema-spheres1}.
\end{proof}

\vspace{1cm}


\subsection{{\bf Special Case I: $C\pr(S^{d}) = C\pr(S^{d}, \tau^{a,0})$ }} \label{sub1}

~

In this section, we consider spheres with the trivial involution $\id = \tau^{a,0}$ on $S^d$, where $a = d+1$.
Recall that
$ \widetilde{KO}_*(C\pr(S^d))$ is the free $\mathcal{CR}$-module with generator
$g_{a,0} = g_{d+1,0} \in KO_{-d}(C\pr(S^d)) \cong \Z$. We will identity unitary representations of this generator for all positive integers $a$.

\vspace{.5cm}

\begin{const} ~ \label{construction1}

For $k$ odd, this construction shows how to produce particular unitaries $Q_{k-1}$ and $U_k$, which we subsequently prove do represent the generators of the appropriate $KO$-groups. 
In each case, start with the standard complex Clifford generators $\Gamma_{1,k}, \dots, \Gamma_{k,k} \in M_n(\C)$ where $n = 2^{(k-1)/2}$.
The claims in the description of the construction will be justified by 
Theorem~\ref{main-thmI} below.

\begin{enumerate}

\item[(1)] Let $k \equiv -1 \pmod 8$.
\begin{itemize}
\item Let $$  S=  \{j \in  \{1, \dots, k\} \mid \text{$j$ is odd} \} \; .$$
\item Define $\Up_{1,k}, \dots, \Up_{k,k} \in M_n(\C)$ by
$$\Up_{i,k} =
	\begin{cases} 
			i  \prod_{ j \in S \backslash \{i\}}  \Gamma_{j,k} & i \in S \\ \hspace{.5cm} \Gamma_{i,k} & i \notin S 
	\end{cases} $$
\item Then $\Up\T_{i,k} =  -\Up_{i,k} \; $ for all $i$.
\item Let $Q\pr_{k-1}(x) = \sum_{i = 1}^k x_i \Up_{i,k}$ and $U\pr_k(x) =  \sum_{i = 1}^k  x_i i \Up_{i,k} + x_{k+1}  I_n$.
\item Then $[Q\pr_{k-1}] \in \widetilde{KO}_{2}(C\pr(S^{k-1}))$ and $[U\pr_k] \in \widetilde{KO}_{1}(C\pr(S^{k}))$.
\end{itemize}

\vspace{.5cm}

\item[(2)] Let $k \equiv 1 \pmod 8$.
\begin{itemize}
\item Let $$  S=  \{j \in  \{1, \dots, k\} \mid \text{$j$ is even} \}.  $$
\item Define $\Up_{1,k}, \dots, \Up_{k,k} \in M_n(\C)$ by
$$\Up_{i,k} =
	\begin{cases} 
			i  \prod_{ j \in S \backslash \{i\}}   \Gamma_{j,k} & i \in S \\ \hspace{.5cm} \Gamma_{i,k} & i \notin S 
	\end{cases} $$
\item Then $\Up\T_{i,k} =  \Up_{i,k} \; $ for all $i$.
\item Let $Q\pr_{k-1}(x) = \sum_{i = 1}^k x_i \Up_{i,k}$ and $U\pr_k(x) = \sum_{i = 1}^k x_i \Up_{i,k} + x_{k+1} i I_n$.
\item Then $[Q\pr_{k-1}] \in \widetilde{KO}_{0}(C\pr(S^{k-1}))$ and $[U\pr_k] \in \widetilde{KO}_{-1}(C\pr(S^{k}))$.
\end{itemize}

\vspace{.5cm}

\item[(3)] Let $k \equiv 3 \pmod 8$.
\begin{itemize}
\item Let $$  S=  \{j \in  \{1, \dots, k\} \mid \text{$j$ is odd and $j \geq 5$} \} . $$
\item Define $\Up_{1,k}, \dots, \Up_{k,k} \in M_n(\C)$ by
$$\Up_{i,k} =
	\begin{cases} 
			 i \prod_{ j \in S \backslash \{i\}} \Gamma_{j,k} & i \in S \\ \hspace{.5cm} \Gamma_{i,k} & i \notin S 
	\end{cases} $$
\item Then $\Up\sT_{i,k} =  - \Up_{i,k} \; $ for all $i$.
\item Let $Q\pr_{k-1}(x) = \sum_{i = 1}^k x_i \Up_{i,k}$ and $U\pr_k(x) =  \sum_{i = 1}^k  x_i i \Up_{i,k} + x_{k+1}  I_n$.
\item Then $[Q\pr_{k-1}] \in \widetilde{KO}_{6}(C\pr(S^{k-1}))$ and $[U\pr_k] \in \widetilde{KO}_{5}(C\pr(S^{k}))$.
\end{itemize}

\vspace{.5cm}

\item[(4)] Let $k \equiv 5 \pmod 8$.
\begin{itemize}
\item Let $$  S=  \{j \in  \{1, \dots, k\} \mid \text{$j$ is even or $j \leq 3$} \} . $$
\item Define $\Up_{1,k}, \dots, \Up_{k,k} \in M_n(\C)$ by
$$\Up_{i,k} =
	\begin{cases} 
			i  \prod_{ j \in S \backslash \{i\}} \Gamma_{j,k} &i   \in S \\ \hspace{.5cm} \Gamma_{i,k} & i \notin S 
	\end{cases} $$
\item Then $\Up\sT_{i,k} =  \Up_{i,k} \; $ for all $i$.
\item Let $Q\pr_{k-1}(x) = \sum_{i = 1}^k x_i \Up_{i,k}$ and $U\pr_k(x) = \sum_{i = 1}^k x_i \Up_{i,k} + x_{k+1} i I_n$.
\item Then $[Q\pr_{k-1}] \in \widetilde{KO}_{4}(C\pr(S^{k-1}))$ and $[U\pr_k] \in \widetilde{KO}_{3}(C\pr(S^{k}))$.
\end{itemize}
\vspace{.5cm}
\end{enumerate}
\end{const}



\begin{thm} \label{main-thmI}
In each case, $[Q\pr_{k-1}]$ and $[U\pr_k]$ from Parts (1)-(4) of Construction~\ref{construction1} represent generators of 
$\widetilde{KO}_{-(k-1)}(C\pr(S^{k-1})) = \Z$ and $\widetilde{KO}_{-k}(C\pr(S^{k})) = \Z$ respectively, for odd $k$. 
In other words,
$$g_{k,0} = [Q\pr_{k-1}] \qquad \text{and} \qquad g_{k+1,0} = [U\pr_k] \; .$$
\end{thm}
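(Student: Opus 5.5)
The plan follows the pattern of the low-dimensional proofs in Section~\ref{section-lowdim}: first show that each constructed unitary satisfies the symmetry $\mathscr{S}_i$ of Table~\ref{unitaryTable} for the claimed degree and lies in $\ker(\ev_*)$. Then use the free $\mathcal{CR}$-module structure. In the key degree the complexification $c$ is an isomorphism (column $n=0$ of Table~\ref{Table-freemodule}), and by Proposition~1 of \cite{SBS} the complexification is (up to a harmless modification) a generator of complex $K$-theory. Degrees are compared mod 8. For $k\equiv -1$ the key degree $-(k-1)\equiv 2$ and $-k\equiv 1$; for $k\equiv 1$ it is $0$ and $-1$; for $k\equiv 3$ it is $6$ and $5$; for $k\equiv 5$ it is $4$ and $3$. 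These match the degrees asserted in Construction~\ref{construction1}.

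\textbf{Step 1: the $\Up_{i,k}$ are Clifford generators with the stated symmetry.} The first part of Lemma~\ref{toggle} applies once I check that $|S|\equiv 0 \pmod 4$ in each case.
\begin{itemize}
\item For $k\equiv -1$, $S$ is the odd indices, so $|S|=(k+1)/2\equiv 0$.
\item For $k\equiv 1$, $S$ is the even indices, so $|S|=(k-1)/2\equiv 0$.
\item For $k\equiv 3$, $S$ is the odd indices $\ge 5$, so $|S|=(k+1)/2-2\equiv 0$.
\item For $k\equiv 5$, $S$ is the even indices together with $j=1,3$, so $|S|=(k-1)/2+2\equiv 0$.
\end{itemize}
Next I use the earlier lemma on $(\Gamma_{i,k})\T$ and $(\Gamma_{i,k})\sT$, together with the numbered items of Lemma~\ref{toggle}.
\begin{itemize}
\item For $k\equiv \pm 1$: the transpose sign $(-1)^{i+1}$ is uniform on $S$ and opposite outside $S$. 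Toggling $S$ makes every generator antisymmetric (case (1)) or symmetric (case (2)).
\item For $k\equiv 3,5$: I use $\sharp\otimes\mathrm{Tr}$, where the generators with $i\le3$ are all $-$ and those with $i\ge 4$ have sign $(-1)^{i+1}$. Toggling odd $i\ge5$ makes all of them $-$ (case (3)). Toggling the even indices and $1,3$ makes all of them $+$ (case (4)); the even indices include $i=2$, which has sign $-$, consistent with the rest of $S$.
\end{itemize}
It remains to check that Lemma~\ref{toggle} requires sign-uniformity only within $S$, which holds here.

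\textbf{Step 2: the symmetry relations.} Since $\tau$ is trivial on $S^d$, on $C(S^d)\otimes M_n$ the operation $\T\otimes\tau$ acts on $\sum x_i M_i$ as $\sum x_i M_i\T$, and similarly for $\sharp$. Note that $\tau$ is complex-linear on the $\C$-coefficients here, as in the proof for Table~\ref{S3unitaryB}.
\begin{itemize}
\item Case (1): $Q$ is self-adjoint with $Q\Ttau=-Q$, which is the $KO_2$ relation. For $U=\sum x_i\, i\Up_i + x_{k+1}I$, the relation $U\Ttau=U^*$ follows because $i\Up_i$ is antisymmetric and anti-self-adjoint.
\item Case (2): $Q\Ttau=Q$ gives $KO_0$, and $U\Ttau=U$ gives $KO_{-1}$ (first line of the table).
\item Cases (3) and (4) are analogous with $\sharp$, giving the relations for $KO_6$/$KO_5$ and $KO_4$/$KO_3$.
\end{itemize}
In every case unitarity follows from the Clifford relations. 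For $KO_0$, $KO_4$ and $KO_{-1}$ the size $n$ must be a multiple of $n_i$, which is fine for $k\ge3$. The case $k=1$ is the degenerate $S^0,S^1$ situation already covered by Table~\ref{S1unitary2}, and $\ev_*$ of the constant value at a point is seen directly to be neutral (or we simply project onto the reduced part).

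\textbf{Step 3: generation.} This is where the main care is needed. The complexification $c[Q\pr_{k-1}]=[\sum x_i\Up_i]$ and $c[U\pr_k]$ must be shown to be generators of $\widetilde{K}_*(C(S^{d}))$. The $\Up_i$ differ from the standard $\Gamma_i$, so Proposition~1 of \cite{SBS} does not apply verbatim. I would first try to argue that any irreducible set of $k$ self-adjoint Clifford generators in $M_n(\C)$, $n=2^{(k-1)/2}$, gives $\pm$ a generator. The complex Clifford algebra $\C l_k$ for $k$ odd has two inequivalent irreducible representations, distinguished by the sign of the central element $i^{(k-1)/2}\Gamma_1\cdots\Gamma_k$. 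If the new set is unitarily equivalent to the standard one, conjugation by a fixed unitary $W$ (homotopic to $1$ in $U(n)$) preserves the class. If it is equivalent to the other representation, then replacing $x_1$ by $-x_1$ shows the class is the negative, which is still a generator. In the $U_k$ case there is the extra factor $i$ on $\Up_i$ in cases (1) and (3). There I rewrite $U=i(\sum x_i\Up_i - i x_{k+1}I)$, use that multiplying by the scalar $i$ is homotopically trivial, and absorb $x_{k+1}\mapsto -x_{k+1}$ as a reflection of $S^k$, which again changes at most the sign. Finally, since $c$ is an isomorphism in the key degree by Table~\ref{Table-freemodule}, each class generates $\widetilde{KO}_{-(k-1)}$, respectively $\widetilde{KO}_{-k}$, so it equals $\pm g_{k,0}$ or $\pm g_{k+1,0}$. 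The sign ambiguity is immaterial for the statement.
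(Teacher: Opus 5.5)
Your proposal is correct and follows essentially the same route as the paper: confirm $|S|\equiv 0 \pmod 4$ and the toggled symmetries from Lemma~\ref{toggle}, check the relations $\mathscr{S}_i$ directly, and conclude from the fact that $c$ is an isomorphism in the key degree together with Proposition~1 of \cite{SBS}. Your Step~3 argument (the two inequivalent irreducible representations of the complex Clifford algebra, plus reflections of the sphere to absorb signs) supplies a justification the paper only asserts, namely that the modified generators $\Up_{i,k}$ still give generators of complex $K$-theory; you also raise the $\ker(\ev_*)$ check, which the paper omits, and your reflection trick for $U\pr_k$ in cases (1) and (3) replaces the paper's use of $[iu]=[u]$ and $[u^*]=-[u]$.
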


\begin{proof}
Note that in each case of Construction~\ref{construction1} we have $|S| \equiv 0 \pmod 4$, so Lemma~\ref{toggle} can be used in the way indicated to create the new set of Clifford generators.
We also note that in each case the set $S$ consists of exactly the indices for the set of Clifford generators $\Gamma_{i,k}$ that do not satisfy the desired symmetry. Then the new adjusted set of of Clifford generators $\Up_{i,k}$ do all satisfy the symmetry indicated in the third bullet point. 

It is easy to check that $Q\pr_{k-1}$ is a self-adjoint unitary and that $U\pr_k$ is a unitary in each case, from the fact that
$\Up_{i,k}$ are self-adjoint unitaries. We check that these unitaries satisfy the correct symmetry in each case.
In the computations below, we write $\tau = \tau^{k,0}$ or $\tau = \tau^{k+1,0}$ as appropriate in the context in each case.

In the case $k \equiv -1 \pmod 8$, the Clifford generators $\Up_{1, k}, \dots, \Up_{k,k}$ are antisymmetric, hence
\begin{align*}
	(Q\pr_{k-1}) \Ttau(x)   
		&= \left( \sum_{i = 1}^k x_i \Up_{i,k} \right) \Ttau \\
		&= \sum_{i = 1}^{k} (x_i)^\tau \Up_{i,k}\T\\
		&= \sum_{i = 1}^{k} x_i  (- \Up_{i,k}) \\
		&= -Q\pr_{k-1} (x) \;  \\
\text{and} \quad 
	(U\pr_k)\Ttau(x)   
		&= \left( \,  \sum_{i = 1}^{k} x_i i \, \Up_{i,k}  + x_{k+1} I_n \right) \Ttau \\
		&=  \sum_{i = 1}^{k} x_i i \, (-\Up_{i,k})  + x_{k+1} I_n   \\
		&=   - \sum_{i = 1}^{k} x_i  i \, \Up_{i,k} +  x_{k+1}  I_n   \\
		&=  (U\pr_k)^*(x) \; .
\end{align*}
Therefore $[Q\pr_{k-1}] \in KO_2(C\pr(S^{k-1}))$ and $[U\pr_{k}] \in KO_1(C\pr(S^{k}))$.

In the case that $k \equiv 1 \pmod 8$, the Clifford generators $\Up_{1, k}, \dots, \Up_{k,k}$ are symmetric, so we have
\begin{align*}
	(Q\pr_{k-1})\Ttau(x)   
		&= \left( \sum_{i = 1}^k x_i \Up_{i,k} \right) \Ttau \\
		&= \sum_{i = 1}^{k} x_i(  \Up_{i,k})\T \\
		&= \sum_{i = 1}^{k} x_i  \Up_{i,k} \\
		&= Q\pr_{k-1} (x) \;  \\
\text{and} \quad 
	(U\pr_k)\Ttau(x)   
		&= \left( \,  \sum_{i = 1}^{k} x_i \Up_{i,k}  + x_{k+1} i \,  I_n \right) \Ttau \\
		&=   \sum_{i = 1}^{k} x_i  (\Up_{i,k})\T  + x_{k+1} i \, \left(I_n \right)\T  \\
		&=  \sum_{i = 1}^{k} x_i  \Up_{i,k}  + x_{k+1} i \, I_n   \\
		&= U\pr_k(x) \; .
\end{align*}
It follows that $[Q\pr_{k-1}] \in \widetilde{KO}_0(C\pr(S^{k-1}))$ and $[U\pr_{k}] \in \widetilde{KO}_{-1}(C\pr(S^{k}))$.

In the third case, $k \equiv 3 \pmod 8$, the elements $\Up_{1, k}, \dots, \Up_{k,k}$ satisfy $\Up_{i,k}\sT = - \Up_{i,k}$. Thus 
similar calculations to above show that $(Q\pr_{k-1}) \sTtau = -Q\pr_{k-1}$  and that $(U\pr_k)\sTtau =  (U_k\pr)^*$ and we obtain 
$[Q\pr_{k-1}] \in  \widetilde{KO}_6(C\pr(S^{k-1}))$ and $[U\pr_{k}] \in \widetilde{KO}_5(C\pr(S^{k}))$.


Finally, if $k \equiv 5 \pmod 8$ we have $\Up_{i,k}\sTtau = \Up_{i,k}$ for all $i$ so
$(Q\pr_{k-1}) \sTtau = Q\pr_{k-1}$ and $(U\pr_k)\sTtau = U\pr_k$ which implies that $[Q\pr_{k-1}] \in  \widetilde{KO}_4(C\pr(S^{k}))$ and $[U\pr_{k}] \in  \widetilde{KO}_{3}(C\pr(S^{k}))$.


Therefore, we have shown that for all $k$,
$$[Q\pr_{k-1}] \in  \widetilde{KO}_{-(k-1)}(C\pr(S^{k-1})) \quad \text{and} \quad 
	[U\pr_k] \in  \widetilde{KO}_{-k}(C\pr(S^{k})) \; .$$ 
It remains to show that these elements are in fact generators of the (reduced) $KO$-group.

The complexification map 
$$c_{-d} \colon \widetilde{KO}_{-d}(A) \rightarrow \widetilde{KU}_{-d}(A)$$ is defined by forgetting the extra symmetry that a self-adjoint unitary representing real $K$-theory must satisfy by Theorem~3.1 of \cite{Boer2020}. In this case at hand with $A = C\pr(S^{d})$, we know that $c_{-d}$ is an isomorphism, because of the structure of the free $\mathcal{CR}$-module ---  namely that
$\widetilde{K}_*\crr(C\pr(S^{d}))$ is a free $\mathcal{CR}$-module with generator in $\widetilde{KO}_{-d}(C\pr(S^{d})) = \Z$.

Thus 
$$c_{-(k-1)}\left( [Q\pr_{k-1}] \right) = [Q_{k-1}] \in \widetilde{KU}_{-(k-1)}(C\pr(S^{k-1})) = \Z $$
where 
$$Q_{k-1}(x) = \sum_{i = 1}^k x_i \Up_{i,k}$$
Now, from Proposition~1 of \cite{SBS} we know that $[Q_{k-1}]$ in each of the four cases represents a generator of 
$\widetilde{KU}_{-(k-1)}(C\pr(S^{k-1})) = \Z$, because each is defined in terms of a set of Clifford generators.
Therefore, it follows that $[Q\pr_{k-1}]$ must be a generator of $\widetilde{KO}_{-(k-1)}(C\pr(S^{k-1})) = \Z$.

The same argument applies for the $[U\pr_k]$ with the following observation.
For $k \equiv 1 \pmod 8$ and $k \equiv 5 \pmod 8$ we have
$$U_k(x) =  \sum_{i = 1}^k  x_i \Up_{i,k} + i x_{k+1}  I_n$$
which matches the definition of $U_k$ in Proposition~1 of \cite{SBS}.
However, for 
$k \equiv -1 \pmod 8$ and $k \equiv 3 \pmod 8$ we have
$$U_k(x) =  \sum_{i = 1}^k  i x_i \Up_{i,k} + x_{k+1}  I_n \; $$
which differs to that of \cite{SBS}. However, recall that for complex $K$-theory, we have $[u] = [iu] \in KU_1(A)$ for any unitary $u \in M_n(A_\C)$.
Furthermore, we also have $[u^*] = -[u] \in KU_1(A)$. It follows from this that in all cases $[U_k]$ is a generator of
$\widetilde{KU}_{-k}(C\pr(S^{k})) = \Z$ and therefore that
$[U\pr_k]$ is a generator of $\widetilde{KO}_{-k}(C\pr(S^{k})) = \Z$.

Therefore, for all odd $k$,
$$g_{k,0} = [Q\pr_{k-1}] \quad \text{and} \quad g_{k+1,0} = [U_k\pr] \; .$$
\end{proof}

\newpage


\subsection{{\bf Special Case II:  $C\pr(S^{d}, \tau^{1,b})$}} \label{sub2}

~

In this section, we consider spheres for which the involution $\tau^{1,b}$ on $S^d$ (where $b = d$) is the the antipodal action in every coordinate except one.
Here $\widetilde{KO}_*(C\pr( S^{d}, \tau^{1, b}))$ is a free $\mathcal{CR}$-module with generator $g_{1,b} \in  \widetilde{KO}_{b}(C(S^{d}, \tau^{1,b})) = \Z$.
Hence we are looking for unitaries that represent the key generators
$g_{1,b} \in  \widetilde{KO}_{b}(C\pr(S^{d}, \tau^{1,b})) = \Z$.

Establishing some notation, for an integer $n$, define 
$$\delta_i^n  = \begin{cases}
					1 & i = n \\ -1 & i \neq n  \; .
			\end{cases} $$
\vspace{1cm} ~ \\ ~

\begin{const} ~ \label{construction2}

For $k$ odd, this construction shows how to produce particular unitaries $Q_{k-1}$ and $U_k$, which ultimately we show to represent the generators of the $KO$-groups indicated, as indicated in Theorem~\ref{main-thm2} below.


\begin{enumerate}

\item[(1)] Let $k \equiv 1 \pmod 8$.
\begin{itemize}
\item Let $$  S=  \{j \in  \{1, \dots, k\} \mid \text{ $j$ is odd, $j \neq 1$} \} \; .$$
\item Define $\Up_{1,k}, \dots, \Up_{k,k} \in M_n(\C)$ by
$$\Up_{i,k} =
	\begin{cases} 
			i  \prod_{ j \in S \backslash \{i\}}  \Gamma_{j,k} & i \in S \\ \hspace{.5cm} \Gamma_{i,k} & i \notin S  \; .
	\end{cases} $$
\item Then 
$\Up\T_{i,k} =  \delta_i^1 \Up_{i,k} \; $ for all $i$.
\item Arrange the variables in $\R^{k+1}$ so that the involution $\tau^{1,k}$ on $S^{k} \subset \R^{k+1}$ is given by $x_i \mapsto \delta_i x_i$.
\item Let $Q\pr_{k-1}(x) = \sum_{i = 1}^k x_i \Up_{i,k}$ and $U\pr_k(x) =  \sum_{i = 1}^k   x_i  \Up_{i,k} + x_{k+1} \; i  I_n$.
\item Then 
		$[Q\pr_{k-1}] \in \widetilde{KO}_0( C\pr(S^{k-1}, \tau^{1, k-1}))$ 
		and $[U\pr_k] \in \widetilde{KO}_{1}(C\pr(S^{k}, \tau^{1,k}))$.
\end{itemize}

\vspace{.5cm}

\item[(2)] Let $k \equiv 3 \pmod 8$.
\begin{itemize}
\item Let $$  S=  \{j \in  \{1, \dots, k\} \mid \text{ $j$ is even, $j \neq 2$} \} \; .$$

\item Define $\Up_{1,k}, \dots, \Up_{k,k} \in M_n(\C)$ by
$$\Up_{i,k} =
	\begin{cases} 
			i  \prod_{ j \in S \backslash \{i\}}  \Gamma_{j,k} & i \in S \\ \hspace{.5cm} \Gamma_{i,k} & i \notin S  \; .
	\end{cases} $$
\item Then $\Up\T_{i,k} = - \delta_i^2 \Up_{i,k}$ for all $i$.
\item Arrange the variables in $\R^{k+1}$ so that the involution $\tau^{1,k}$ on $S^{k} \subset \R^{k+1}$ is given by 
	$x_i \mapsto \delta_i^2 x_i$.
\item Let $Q\pr_{k-1}(x) = \sum_{i = 1}^k x_i \Up_{i,k}$ and $U\pr_k(x) =  \sum_{i = 1}^k  x_i  \Up_{i,k} + x_{k+1} i  I_n$.
\item Then 
		$[Q\pr_{k-1}] \in \widetilde{KO}_2( C\pr(S^{k-1}, \tau^{1, k-1}))$ 
		and $[U\pr_k] \in \widetilde{KO}_{3}(C\pr(S^{k}, \tau^{1,k}))$.
\end{itemize}
\vspace{.5cm}

\item[(3)] Let $k \equiv 5 \pmod 8$.
\begin{itemize}
\item Let $$  S=  \{j \in  \{1, \dots, k\} \mid \text{$j$ is odd, $j \geq 7$} \} . $$
\item Define $\Up_{1,k}, \dots, \Up_{k,k} \in M_n(\C)$ by
$$\Up_{i,k} =
	\begin{cases} 
			i  \prod_{ j \in S \backslash \{i\}}  \Gamma_{j,k} & i \in S \\ \hspace{.5cm} \Gamma_{i,k} & i \notin S  \; .
	\end{cases} $$
\item Arrange the variables in $\R^{k+1}$ so that the involution $\tau^{1,k}$ on $S^{k} \subset \R^{k+1}$ is given by $x_i \mapsto \delta_i^5 x_i$ 
\item Then $\Up\sT_{i,k} =   \delta_i^5 \Up_{i,k}$ for all $i$.
\item Let $Q\pr_{k-1}(x) = \sum_{i = 1}^k x_i \Up_{i,k}$ and $U\pr_k(x) =  \sum_{i = 1}^k  x_i  \Up_{i,k} + x_{k+1} \; i I_n$.
\item Then 
		$[Q\pr_{k-1}] \in \widetilde{KO}_4( C\pr(S^{k-1}, \tau^{1, k-1}))$ 
		and $[U\pr_k] \in \widetilde{KO}_{5}(C\pr(S^{k}, \tau^{1,k}))$.
\end{itemize}
\vspace{.5cm}

\item[(4)] Let $k \equiv 7 \pmod 8$.
\begin{itemize}
\item Let $$  S=  \{j \in  \{1, \dots, k\} \mid \text{$j$ is even or $j =3$} \} . $$
\item Define $\Up_{1,k}, \dots, \Up_{k,k} \in M_n(\C)$ by
$$\Up_{i,k} =
	\begin{cases} 
			i  \prod_{ j \in S \backslash \{i\}}  \Gamma_{j,k} & i \in S \\ \hspace{.5cm} \Gamma_{i,k} & i \notin S  \; .
	\end{cases} $$
\item Then $\Up\sT_{i,k} = -  \delta_i^2 \Up_{i,k}$ for all $i$.
\item Arrange the variables in $\R^{k+1}$ so that the involution $\tau^{1,k}$ on $S^{k} \subset \R^{k+1}$ is given by 
	$x_i \mapsto \delta_i^2 x_i$ 
\item Let $Q\pr_{k-1}(x) = \sum_{i = 1}^k x_i \Up_{i,k}$ and $U\pr_k(x) =  \sum_{i = 1}^k  x_i  \Up_{i,k} + x_{k+1} \; i I_n$.
\item Then $[Q\pr_{k-1}] \in \widetilde{KO}_{6}(C\pr(S^{k-1}, \tau^{1,{k-1}}))$ and $[U\pr_k] \in \widetilde{KO}_{7}(C\pr(S^{k}, \tau^{1,k}))$.
\end{itemize}
\vspace{.5cm}
\end{enumerate}

\end{const}

\begin{thm} \label{main-thm2}
The elements $[Q\pr_{k-1}]$ and $[U\pr_k]$ from Parts (1)-(4) of Construction~\ref{construction2} represent generators of 
$\widetilde{KO}_{k-1}(C\pr(S^{k-1}, \tau^{1,k-1})) = \Z$ and $\widetilde{KO}_{k}(C\pr(S^{k}, \tau^{1,k})) = \Z$ respectively, for odd $k$. 
In other words,
$$g_{1,k-1} = [Q\pr_{k-1}] \qquad \text{and} \qquad g_{1,k} = [U\pr_k] \; .$$
\end{thm}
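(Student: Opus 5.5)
I will follow the proof of Theorem~\ref{main-thmI} closely, in three steps.

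\textbf{Step 1: the modified generators are Clifford generators.} In each of the four cases of Construction~\ref{construction2} I first check that $|S| \equiv 0 \pmod 4$. For $k \equiv 1 \pmod 8$, the odd indices other than $1$ number $(k-1)/2 \equiv 0 \pmod 4$. The other three cases are counted similarly. Lemma~\ref{toggle} then shows that $\Up_{1,k}, \dots, \Up_{k,k}$ is again a list of complex $k$-Clifford generators. Because they are self-adjoint, $Q\pr_{k-1}$ is a self-adjoint unitary on $S^{k-1}$ and $U\pr_k$ is a unitary on $S^k$.

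\textbf{Step 2: the stated symmetries of the $\Up_{i,k}$.} Using the earlier lemma, $\Gamma_{i,k}\T = (-1)^{i+1}\Gamma_{i,k}$, and $\Gamma_{i,k}\sT = -\Gamma_{i,k}$ for $i \le 3$, while $\Gamma_{i,k}\sT = (-1)^{i+1}\Gamma_{i,k}$ otherwise. In each case I observe that $S$ is exactly the set of indices whose sign is wrong relative to the target pattern. For example, in case (1) the target is $\delta_i^1$, so we need $\Up_1$ symmetric and all others antisymmetric. The even-indexed generators are already antisymmetric, and $\Gamma_1$ is symmetric. Hence $S$ is the set of odd $j \neq 1$, whose generators are symmetric, and Lemma~\ref{toggle} flips all of them to antisymmetric. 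The other cases go the same way using the $\sharp$-version of the lemma.

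\textbf{Step 3: the $KO$-symmetry of $Q\pr_{k-1}$ and $U\pr_k$.} The involution sends $x_i \mapsto \delta_i^n x_i$ (with $x_{k+1}$ negated, since $a = 1$ means only the distinguished coordinate is fixed). Take case (1) as the model:
$$(Q\pr_{k-1})\Ttau = \sum x_i\delta_i^1\,\delta_i^1\Up_{i,k} = Q\pr_{k-1},$$
giving $KO_0$. Similarly, $(U\pr_k)\Ttau = \sum x_i\Up_{i,k} - x_{k+1}\,iI_n = (U\pr_k)^*$, giving $KO_1$. Here the fact that $\tau$ is antimultiplicative and complex-linear on $i$, together with $x_{k+1}^\tau = -x_{k+1}$, produces the adjoint. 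Cases (2)--(4) are analogous. The two sign factors combine to give $Q\Ttau = -Q$ (for $KO_2$) or $Q\sTtau = \pm Q$ (for $KO_4$, $KO_6$), and $U\sTtau = U$ (for $KO_3$), $U\sTtau = U^*$ (for $KO_5$), or $U\Ttau = -U$ (for $KO_7 = KO_{-1}$ in the second picture). Reducedness follows from evaluating at the fixed point $(1,0,\dots,0)$ after the reindexing. I expect the value there to be a constant Clifford matrix that is homotopic to $I^{(i)}$ through matrices with the required symmetry, or that it can be handled by subtracting the class of the constant.

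\textbf{Generation.} Since $\widetilde{K}\crr$ is free on $g_{1,b}$ in degree $b$, the map $c_b$ is an isomorphism. Under $c$, our unitaries map to the Schulz-Baldes--Stoiber generators $Q_{k-1}$, $U_k$ built from a Clifford list, and these are generators by Proposition~1 of \cite{SBS}. Hence our classes are generators.

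The main obstacle I anticipate is the careful sign bookkeeping in Step 2 and Step 3. For the $\sharp$-cases one must verify that the reordered variables and the choice of $S$ (for instance "even or $j = 3$") give exactly $\Up\sT_{i,k} = -\delta_i^2\Up_{i,k}$. One must also check that the constant term $x_{k+1} iI_n$ transforms correctly: in the $\sharp$ cases it gives $U$ rather than $U^*$ for $KO_3$, and in case (4) it should give $U\Ttau=-U$-type behaviour. I would verify these sign patterns first for the lowest representative $k$ in each class, namely $k = 1, 3, 5, 7$, and then use the inductive form of Construction~\ref{construction0} to conclude in general.
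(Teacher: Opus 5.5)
Your route is the paper's: toggle with Lemma~\ref{toggle}, check the $KO$-symmetry by a sign count, and conclude because $c_b$ is an isomorphism on the free $\mathcal{CR}$-module and the complexified unitaries are the generators of \cite{SBS}. The concrete problem is in Step~3, where the relations you assign to $U\pr_k$ in cases (2) and (4) are interchanged.

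\begin{itemize}
\item Case (2), $k\equiv 3 \pmod 8$, is a transpose case, with $\Up_{i,k}\T=-\delta_i^2\Up_{i,k}$. One gets $(U\pr_k)\Ttau=\sum_i(\delta_i^2x_i)(-\delta_i^2\Up_{i,k})-x_{k+1}\,iI_n=-U\pr_k$, which is the \emph{second} picture of $KO_3$. The relation $(U\pr_k)\sTtau=U\pr_k$ that you assert does not hold in this arrangement. For $k=3$ one has $\Up_{i,3}=\Gamma_{i,3}$, all $\sharp$-antisymmetric, and $(U\pr_3)\sTtau=x_1\Gamma_{1,3}-x_2\Gamma_{2,3}+x_3\Gamma_{3,3}-x_4\,iI_2\neq U\pr_3$. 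The first picture is what you get when the fixed coordinate is the one multiplying $iI_n$, as in Table~\ref{S3unitaryD}; it is not what the arrangement of Construction~\ref{construction2} gives.
\item Case (4) is a $\sharp$ case and gives $(U\pr_k)\sTtau=-U\pr_k$, the second picture of $KO_{-1}$. The relation $u\Ttau=-u$ you wrote is not a $KO_{-1}$ relation at all; it is the second picture of $KO_3$. As stated, it would put the class in the wrong degree.
\end{itemize}
Your closing remark that ``the $\sharp$ cases'' produce $KO_3$ is the same mix-up.

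The verification you flagged for case (4) also fails as Construction~\ref{construction2}(4) is literally written; the paper's proof simply asserts the pattern. The standard generators with $\Gamma_{i,k}\sT=-\Gamma_{i,k}$ are exactly those with $i\le 3$ or $i$ even. So toggling $S=\{2,3\}\cup\{4,6,\dots,k-1\}$ leaves $\Up_{1,k}=\Gamma_{1,k}$ as the only $\sharp$-antisymmetric generator. You get $\Up_{i,k}\sT=-\delta_i^1\Up_{i,k}$, not $-\delta_i^2\Up_{i,k}$. With the involution fixing $x_2$, $(Q\pr_{k-1})\sTtau$ is then not $\pm Q\pr_{k-1}$; for $k=7$ it is $x_1\Up_{1,7}+x_2\Up_{2,7}-\sum_{i\ge 3}x_i\Up_{i,7}$. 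Your Step~2 claim that $S$ is exactly the wrong-sign set therefore fails here. Repair it either by letting the involution fix $x_1$, or by toggling $S=\{1,3\}\cup\{4,6,\dots,k-1\}$ instead; that set still has $(k+1)/2\equiv 0\pmod 4$ elements, all $\sharp$-antisymmetric. Cases (1)--(3) check out as written.

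Finally, your reducedness remark, which the paper's proof omits, needs no correction term if you evaluate at $+e_j$, with $j$ the distinguished index. The value there is the constant $\Up_{j,k}$, which has trace zero. It is therefore trivial wherever $KO_b(\R)\to KU_b(\R)$ is injective or $KO_b(\R)=0$. In the two $\Z_2$ degrees one checks directly:
\begin{itemize}
\item case (1), $k\ge 9$: $\det\Gamma_{1,k}=1$;
\item case (2): $-i\Gamma_{2,k}$ has the same Pfaffian sign as the neutral element.
\end{itemize}
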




\begin{proof}
Note that (as in the previous construction) in each case $|S| \equiv 0 \pmod 4$, so Lemma~\ref{toggle} justifies the modification of the Clifford generators.
We also note that in each case the set $S$ is {\it almost} exactly the indices for the set of Clifford generators $\Gamma_{i,k}$ that do not satisfy the desired symmetry (in fact, exactly so except for one coordinate). Then the new adjusted set of Clifford generators $\Up_{i,k}$ do all satisfy the desired symmetry except for one coordinate. This accounts for the formula in the third bullet point, which is modified by the
$\delta_i^n$ factor.

As before it is easily checked that $Q\pr_{k-1}$ is a self-adjoint unitary and that $U\pr_k$ is a unitary. Below we verify that in each case these unitaries satisfy the correct symmetry. Write $\tau = \tau^{k-1, 1}$ or $\tau = \tau^{k,1}$ as appropriate in the computations below.

In the case $k \equiv 1 \pmod 8$, the Clifford generators satisfy $\Up_{i,k} = \delta_i^1 \Up_{i,k}$ and the involution satisfies
$(x_i)^\tau = \delta_i^1 x_i$.
So we have
\begin{align*}
	(Q\pr_{k-1}) \Ttau(x)   
		&= \left( \sum_{i = 1}^k x_i \Up_{i,k} \right) \Ttau \\
		&= \sum_{i = 1}^{k} (x_i)^\tau \Up_{i,k}\T\\
		&= \sum_{i = 1}^{k} (\delta^1_i x_i )( \delta^1_i \Up_{i,k}) \\
		&= \sum_{i = 1}^{k} x_i   \Up_{i,k} \\
		&= Q\pr_{k-1} (x) \;  \\
\text{and} \quad 
	(U\pr_k)\Ttau(x)   
		&= \left( \,  \sum_{i = 1}^{k} x_i  \Up_{i,k}  + x_{k+1} \; i I_n \right) \Ttau \\
		&= \sum_{i = 1}^{k} (x_i)^\tau \Up_{i,k}\T +   (x_{k+1})^\tau \; i I_n\T \\		
		&= \sum_{i = 1}^{k} (\delta^1_i x_i) (\delta^1_i \Up_{i,k}) +   (\delta_{k+1}^1 x_{k+1}) \; i I_n \\	
		&=  \sum_{i = 1}^{k} x_i \Up_{i,k}  - x_{k+1} \; i I_n   \\
		&=  (U\pr_k)^*(x) \; .
\end{align*}
Therefore $[Q\pr_{k-1}] \in \widetilde{KO}_{0}( C\pr(S^{k-1}, \tau^{1, k-1}))$ 
		and $[U\pr_k] \in \widetilde{KO}_{1}(C\pr(S^{k}, \tau^{1,k}))$.

Now, assume that $k \equiv 3 \pmod 8$. The Clifford generators satisfy $\Up_{i,k} = -\delta_i^2 \Up_{i,k}$ and the involution satisfies
$(x_i)^\tau = \delta_i^2 x_i$. Hence,
\begin{align*}
	(Q\pr_{k-1}) \Ttau(x)   
		&= \left( \sum_{i = 1}^k x_i \Up_{i,k} \right) \Ttau \\
		&= \sum_{i = 1}^{k} (x_i)^\tau \Up_{i,k}\T\\
		&= \sum_{i = 1}^{k} (\delta_i^2 x_i )( -\delta_i^2 \Up_{i,k}) \\
		&= -\sum_{i = 1}^{k} x_i   \Up_{i,k} \\
		&= -Q\pr_{k-1} (x) \;  \\
\text{and} \quad 
	(U\pr_k)\Ttau(x)   
		&= \left( \,  \sum_{i = 1}^{k} x_i  \Up_{i,k}  + x_{k+1} i I_n \right) \Ttau \\
		&= \sum_{i = 1}^{k} ( x_i)^\tau ( \Up_{i,k}\T) +   (x_{k+1})^\tau \; i I_n\T \\	
		&= \sum_{i = 1}^{k} (\delta_i^2 x_i) (-\delta_i^2 \Up_{i,k}) +   (\delta_{k+1}^2 x_{k+1}) \; i I_n \\	
		&=  - \sum_{i = 1}^{k} x_i \Up_{i,k}  - x_{k+1} i  I_n   \\
		&=  - (U\pr_k)(x) \; .
\end{align*}
Therefore $[Q\pr_{k-1}] \in \widetilde{KO}_{2}( C\pr(S^{k-1}, \tau^{1, k-1}))$ 
		and $[U\pr_k] \in \widetilde{KO}_{3}(C\pr(S^{k}, \tau^{1,k}))$.
	(Where here we are using the {\it second} characterization given in the {\it second line} for $KO_3(A)$ in Table~\ref{unitaryTable}).
		
In the third case, for $k \equiv 5 \pmod 8$ we have $\Up_{i,k}\sT = \delta_i^5 \Up_{i,k}$ and $(x_i)^\tau = \delta_i^5 x_i$. Similar calculations to the above show that $(Q\pr_{k-1}) \sTtau = Q\pr_{k-1}$ and $(U\pr_{k-1}) \sTtau = (U\pr_{k-1})^*$ showing that the represent the appropriate real $K$-theory classes.


Finally,  in the fourth case for $k \equiv 7 \pmod 8$ we have $\Up_{i,k}\sT = -\delta_i^2 \Up_{i,k}$ and $(x_i)^\tau = \delta_i^2 x_i$; and this will yield the results that $(Q\pr_{k-1}) \sTtau = -Q\pr_{k-1}$ and $(U\pr_{k-1}) \sTtau = -U\pr_{k-1}$ as desired. (Again, this uses the {\it second} characterization of $KO_{-1}(A)$.)

This shows that $[Q_{k-1}\pr] \in \widetilde{KO}_{k-1}(C\pr(S^{k-1}, \tau^{1, k-1}))$ and 
$[U_k\pr] \in \widetilde{KO}_k(C\pr(S^{k}, \tau^{1, k}))$ for all $k$ odd. At this point, the proof that
$$g_{1,k-1} = [Q_{k-1}\pr] \quad \text{and} \quad g_{1,k} = [U_k\pr]$$
 is the same as (the last paragraph of) the proof of Theorem~\ref{main-thmI}.

%
\end{proof}
\newpage


\subsection{{\bf The General Case: $C\pr(S^{d}, \tau^{a,b})$}} \label{sub3}

~

In this subsection, we consider the most general situation of a sphere with an involution (but still excluding the case with the antipodal involution). Let $a$ and $b$ be nonnegative integers where $a \geq 1$ and $a + b = d+1$. Then
$$\widetilde{KO}_*(C\pr( S^d, \tau^{a, b}))  
			= \Sigma^{a-b-1} KO_*(\R) \; $$
and we are looking to identify unitaries that represent the key generators
$$g_{a,b} \in \widetilde{KO}_{b-a+1}(C\pr(S^{d}, \tau^{a,b})) \cong \Z \; .$$

We establish the following notation: for a set $X$ and an integer $i$, we define
$$\delta_{i}^X = \begin{cases} 1 & i \in X \\ -1 & i \notin X . \end{cases} $$

\begin{lemma} \label{lemma:modifyCG}
Let $a$ and $b$ be nonnegative integers where $a + b  = k$ is odd. 
Then there exist a set $X \subseteq \{1,2, \dots, k\}$
that satisfies $|X| = a $, $|X^c| = b$, and a set of Clifford generators
$$\Up_{1,k}, \dots, \Up_{k,k}$$ such that
\begin{enumerate}
\item If $a-b \equiv 1 \pmod 8$, then  
$\Up_{i,k}\T = \delta_{i}^{X} \Up_{1,k}$.
 \item If $a-b \equiv 7 \pmod 8$, then 
$\Up_{i,k}\T = - \delta_{i}^{X} \Up_{1,k}$.
\item If $a-b \equiv 5 \pmod 8$, then 
$\Up_{i,k}\sT = \delta_{i}^{X} \Up_{1,k}$.
 \item If $a-b \equiv 3 \pmod 8$, then 
$\Up_{i,k}\sT = - \delta_{i}^{X} \Up_{1,k}$.
\end{enumerate}
\end{lemma}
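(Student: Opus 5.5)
The plan is to start from the standard complex $k$-Clifford generators $\Gamma_{1,k},\dots,\Gamma_{k,k}$ of Construction~\ref{construction0} and toggle a suitable subset of size divisible by $4$ using Lemma~\ref{toggle}. The set $X$ will then be read off as the set of indices of the generators with the desired sign. (I read the conclusion as $\Up_{i,k}\T=\pm\delta_i^X\Up_{i,k}$, and similarly for $\sharp\otimes\mathrm{Tr}$; the subscript $1$ there is a typo.)

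\textbf{Step 1: count signs.} Fix the relevant operation: $\mathrm{Tr}$ in cases (1) and (2), and $\sharp\otimes\mathrm{Tr}$ in cases (3) and (4). Let $p$ be the number of generators that are symmetric under it and $m$ the number that are antisymmetric. The lemma before Lemma~\ref{toggle} gives $\Gamma_{i,k}\T=(-1)^{i+1}\Gamma_{i,k}$, so
$$p=\tfrac{k+1}{2},\qquad m=\tfrac{k-1}{2},\qquad p-m=1 .$$
For $\sharp\otimes\mathrm{Tr}$ (which requires $k\ge3$, since the matrix size must be even), the first three generators are antisymmetric and the rest follow $(-1)^{i+1}$. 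Hence
$$p=\tfrac{k-3}{2},\qquad m=\tfrac{k+3}{2},\qquad p-m=-3 .$$
When $k=1$ we have $a-b=\pm1$, so only cases (1) and (2) occur, consistent with this restriction.

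\textbf{Step 2: toggle to reach the target counts.} Lemma~\ref{toggle}, applied to a set $S$ of $4t$ generators that all have the same sign, produces a new Clifford system in which exactly those $4t$ generators have flipped sign. This changes $p-m$ by $\mp 8t$, so $p-m$ is invariant modulo $8$ and can be shifted by any multiple of $8$ as long as enough generators of one sign exist.

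In case (1) we want $p=a$ and $m=b$, with $X$ the set of symmetric indices. Write $a-b=1+8t$.
\begin{itemize}
\item If $t>0$, toggle $4t$ antisymmetric generators. This is possible because $b=\tfrac{k-1}{2}-4t\ge0$ forces $4t\le m$.
\item If $t<0$, toggle $4|t|$ symmetric generators. This is possible because $a=\tfrac{k+1}{2}-4|t|\ge0$.
\end{itemize}
In case (2) we need $m=a$ and $p=b$, with $X$ the set of antisymmetric indices. Then $p-m=b-a\equiv1\pmod 8$, and the same argument applies. Cases (3) and (4) are handled identically with $\sharp\otimes\mathrm{Tr}$: there the targets $p-m=a-b\equiv5$ and $p-m=b-a\equiv-3\equiv5\pmod 8$ agree with the starting value $-3$ modulo $8$. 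In every case $|X|=a$ and $|X^c|=b$ by construction.

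\textbf{Main obstacle.} The only delicate point is the bookkeeping in Step 2. One must check that the required number $4|t|$ of same-sign generators is always available; this follows from the nonnegativity of $a$ and $b$ as shown above. One must also check that each toggled subset consists of generators of a single sign, which is exactly the hypothesis Lemma~\ref{toggle} needs. Finally, $X$ may be empty, for example when $k=1$ and $a=0$, and this degenerate case is harmless. No suitable toggle is needed when $t=0$.
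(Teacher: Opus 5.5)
Your proposal is correct and follows essentially the same route as the paper. Both start from the standard generators of Construction~\ref{construction0}, count the symmetric and antisymmetric ones under $\mathrm{Tr}$ (difference $1$) or $\sharp\otimes\mathrm{Tr}$ (difference $-3$), and toggle a same-sign subset of size divisible by $4$ via Lemma~\ref{toggle} to reach $|X|=a$, $|X^c|=b$. Your explicit check that enough same-sign generators are always available (from $a,b\ge 0$), your note that $k\ge 3$ in the $\sharp$ cases, and your correction of $\Up_{1,k}$ to $\Up_{i,k}$ fill in details the paper leaves implicit.
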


\begin{proof}
For the Case (1), assume $a - b\equiv 1 \pmod 8$. By adding an integer multiple of 4 to $a$ and subtracting the same amount from $b$, we obtain integers $a'$ and $b'$ such that
the following hold:
\begin{align*} a' &= b' + 1  &  ~ a & \equiv a'  \pmod 4, \\
 a + b &= a' + b'  
  &  b & \equiv b' \pmod 4 \; .
\end{align*}

Consider the standard set of Clifford generators
$$\Gamma_{1,k}, \dots, \Gamma_{k,k} \; .$$
Then $$\Gamma_{i,k}\T = \delta_i^{X'} \Gamma_{i,k}$$
where $X'$ is the set of positive odd integers less than or equal to $k$. 
Then  
$$|X'| = (k+1)/2  = a' \quad \text{and} \quad |(X')^c| = (k-1)/2  = b'\; .$$
Note that if $a = a'$ and $b = b'$, then we are done with $X = X'$ and $\Up_{i,k} = \Gamma_{i,k}$.

Otherwise, use Lemma~\ref{toggle} to toggle some subset $S$ of these Clifford generators, where the cardinality 
$S$ is a multiple of 4. This subset will correspond either to a set of symmetric Clifford generators that we make antisymmetric; or to a set of antisymmetric Clifford generators that we make symmetric. In this way, we replace $X'$ with a new set $X$ of cardinality $a$, indexing exactly the new Clifford generators that are symmetric. This new set of Clifford generators is labeled as $\Up_{i,k}$ and satisfies Part (1).

For Case (2), assume that $a - b \equiv 7 \pmod 8$.  Again, starting with the standard set of Clifford generators we have
$$\Gamma_{i,k}\T = - \delta_i^{X'} \Gamma_{i,k}$$
where $X'$ is the set of positive even integers less than or equal to $k$.  This time we have 
$$|X'| = (k-1)/2  \quad \text{and} \quad |(X')^c| = (k+1)/2  \; $$
which satisfies $|X'| - |(X')^c| = -1$.
So if $a - b = -1$ we are done, using $X = X'$. Otherwise, we use Lemma~\ref{toggle} to toggle some subset $S$ of these Clifford generators (where $|S| \equiv 0 \pmod 4$) to obtain a set of Clifford generators where the cardinality of the antisymmetric generators is $|X| = a$.

For Case (3), assume that $a  - b \equiv 5 \pmod 8$. With the standard set of Clifford generators
we have 
$$\Gamma_{i,k}\sT =  \delta_i^{X'} \Gamma_{i,k}$$
where $X'$ is the set of odd integers between $5$ and $k$ (inclusive). Then 
$$|X'| = (k-3)/2  \quad \text{and} \quad |(X')^c| = (k+3)/2  \; $$
which satisfies $|X'| - |(X')^c| = -3$.
If $a - b = -3$ we are done,
otherwise we toggle some subset of $X'$ to obtain $X$ and the desired set of Clifford generators.

Finally, for Case (4), assume that $a  - b \equiv 3 \pmod 8$. With the standard set of Clifford generators
we have 
$$\Gamma_{i,k}\sT =  - \delta_i^{X'} \Gamma_{i,k}$$
where 
$$X' = \{1,3\} \cup \{ k \in \{2, 4, \dots, k-1\} \mid \text{$k$ even} \}$$
so we have
$$|X'| = (k+3)/2  \quad \text{and} \quad |(X')^c| = (k-3)/2  \; $$
which satisfies $|X'| - |(X')^c| = 3$.
If $a - b = 3$ we are done, otherwise we again toggle some subset of $X'$ as in the previous cases to obtain the desired set of Clifford generators.
\end{proof}

\vspace{1cm}

\begin{const} \label{construction3}
Let $a$ and $b$ be nonnegative integers where $a + b  = k$ is odd. This construction shows how to produce particular unitaries $Q_{a,b}$ and $U_{a,b+1}$, which ultimately we show to represent the generators of the $KO$-groups indicated, as indicated in Theorem~\ref{main-thm3} below.

\begin{enumerate} 
\item[(1)] Assume $a-b \equiv 1 \pmod 8$
\begin{itemize}
\item Let $\Up_{1,k}, \dots, \Up_{k,k}$ and $X$ be as given in Lemma~\ref{lemma:modifyCG} so that
 $$\Gamma_{i,k} \T =  \delta_i^X \, \Gamma_{i,k}  \; .$$
\item Arrange the variables in $\R^k$ so that the involution $\tau^{a,b}$ on $S^{k-1} \subset \R^{k}$ is given by 
$$x_i \mapsto \delta_i^X x_i \; .$$
\item Extend $\tau^{a,b}$ to an involution $\tau^{a, b+1}$ on $S^{k+1}$ by  $x_{k+1} \mapsto -x_{k+1} \; .$
\item Let $Q\pr_{a,b}(x) = \sum_{i = 1}^k x_i \Gamma_{i,k}$ and $U\pr_{a,b+1}(x) = \sum_{i = 1}^k x_i \Gamma_{i,k} + x_{k+1} i I_n \; .$
\item Then $[Q\pr_{a,b}] \in \widetilde{KO}_{b-a+1}( C\pr(S^{k-1}, \tau^{a,b}))$ 
and $[U\pr_{a,b+1}] \in  \widetilde{KO}_{b-a+2}( C\pr(S^{k}, \tau^{a,b+1}))$.
\end{itemize}

\vspace{.5cm}

\item[(2)] Assume $a-b \equiv 7 \pmod 8$
\begin{itemize}
\item Let $\Up_{1,k}, \dots, \Up_{k,k}$ and $X$ be as given in Lemma~\ref{lemma:modifyCG} so that
 $$\Gamma_{i,k} \T =  - \delta_i^X \, \Gamma_{i,k}  \; .$$
\item Arrange the variables in $\R^k$ so that the involution $\tau^{a,b}$ on $S^{k} \subset \R^{k+1}$ is given by 
$$x_i \mapsto \delta_i^X x_i \; .$$
\item Extend $\tau^{a,b}$ to an involution $\tau^{a, b+1}$ on $S^{k+1}$ by  $x_{k+1} \mapsto -x_{k+1} \; .$
\item Let $Q\pr_{a,b}(x) = \sum_{i = 1}^k x_i \Gamma_{i,k}$ and $U\pr_{a,b+1}(x) = \sum_{i = 1}^k x_i \Gamma_{i,k} + x_{k+1} i I_n \; .$
\item Then $[Q\pr_{a,b}] \in \widetilde{KO}_{b-a+1}( C\pr(S^{k-1}, \tau^{a,b}))$ 
and $[U\pr_{a,b+1}] \in  \widetilde{KO}_{b-a+2}( C\pr(S^{k}, \tau^{a,b+1}))$.
\end{itemize}

\item[(3)] Assume  $a-b \equiv 5 \pmod 8$
\begin{itemize}
\item Let $\Up_{1,k}, \dots, \Up_{k,k}$ and $X$ be as given in Lemma~\ref{lemma:modifyCG} so that
 $$\Gamma_{i,k} \sT =  \delta_i^X \, \Gamma_{i,k}  \; .$$
\item Arrange the variables in $\R^k$ so that the involution $\tau^{a,b}$ on $S^{k} \subset \R^{k+1}$ is given by 
$$x_i \mapsto \delta_i^X x_i \; .$$
\item Extend $\tau^{a,b}$ to an involution $\tau^{a, b+1}$ on $S^{k+1}$ by  $x_{k+1} \mapsto -x_{k+1} \; .$
\item Let $Q\pr_{a,b}(x) = \sum_{i = 1}^k x_i \Gamma_{i,k}$ and $U\pr_{a,b+1}(x) = \sum_{i = 1}^k x_i \Gamma_{i,k} + x_{k+1} i I_n \; .$
\item Then $[Q\pr_{a,b}] \in \widetilde{KO}_{b-a+2}( C\pr(S^{k-1}, \tau^{a,b}))$ 
and $[U\pr_{a,b+1}] \in  \widetilde{KO}_{b-a+2}( C\pr(S^{k}, \tau^{a,b+1}))$.
\end{itemize}

\item[(4)] Assume $a-b \equiv 3 \pmod 8$
\begin{itemize}
\item Let $\Up_{1,k}, \dots, \Up_{k,k}$ and $X$ be as given in Lemma~\ref{lemma:modifyCG} so that
 $$\Gamma_{i,k} \sT =  -\delta_i^X \, \Gamma_{i,k}  \; .$$
\item Arrange the variables in $\R^k$ so that the involution $\tau^{a,b}$ on $S^{k} \subset \R^{k+1}$ is given by 
$$x_i \mapsto \delta_i^X x_i \; .$$
\item Extend $\tau^{a,b}$ to an involution $\tau^{a, b+1}$ on $S^{k+1}$ by  $x_{k+1} \mapsto -x_{k+1} \; .$
\item Let $Q\pr_{a,b}(x) = \sum_{i = 1}^k x_i \Gamma_{i,k}$ and $U\pr_{a,b+1}(x) = \sum_{i = 1}^k x_i \Gamma_{i,k} + x_{k+1} i I_n \; .$
\item Then $[Q\pr_{a,b}] \in \widetilde{KO}_{b-a+1}( C\pr(S^{k-1}, \tau^{a,b}))$ 
and $[U\pr_{a,b+1}] \in  \widetilde{KO}_{b-a+2}( C\pr(S^{k}, \tau^{a,b+1}))$.
\end{itemize}
\end{enumerate}

\vspace{2cm}

\end{const}

\begin{thm} \label{main-thm3}
Let $a$ and $b$ be nonnegative integers where $a + b  = k$ is odd.
The elemenets $[Q\pr_{a,b}]$ and $[U\pr_{a,b+1}]$ from Parts (1)-(4) of Construction~\ref{construction3} represent generators of 
$\widetilde{KO}_{b-a+1}(C\pr(S^{k}, \tau^{a,b+1})) = \Z$ and $\widetilde{KO}_{b-a+2} (C\pr(S^{k+1}, \tau^{a+1,b+1})) = \Z$, respectively.
In other words,
$$g_{a,b} = [Q\pr_{a,b}] \qquad \text{and} \qquad g_{a,b+1} = [U\pr_{a,b+1}] \; .$$
\end{thm}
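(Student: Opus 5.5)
The plan follows the two-stage template of the proofs of Theorems~\ref{main-thmI} and \ref{main-thm2}. First we show that $Q\pr_{a,b}$ and $U\pr_{a,b+1}$ satisfy the symmetry from Table~\ref{unitaryTable} appropriate to the claimed degree. Then we show they are generators, using the fact that $c$ is an isomorphism in the degree of the key generator of a free $\mathcal{CR}$-module.

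Here $Q\pr_{a,b}$ lives on $S^{k-1}$ with $\tau^{a,b}$, and $U\pr_{a,b+1}$ lives on $S^{k}$ with $\tau^{a,b+1}$. We read the $\Gamma_{i,k}$ in Construction~\ref{construction3} as the modified generators $\Up_{i,k}$ of Lemma~\ref{lemma:modifyCG}, and the $\Up_{1,k}$ on the right side of that lemma as $\Up_{i,k}$. Since the $\Up_{i,k}$ are self-adjoint anticommuting unitaries, $Q\pr_{a,b}$ is a self-adjoint unitary and $U\pr_{a,b+1}$ is a unitary.

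\textbf{Symmetry in each case.} We use $(x_i)^\tau=\delta_i^X x_i$ for $i\le k$ and $(x_{k+1})^\tau=-x_{k+1}$. We also use that $\rm{Tr}$ and $\sharp\otimes\rm{Tr}$ are complex-linear, antimultiplicative, and fix $I_n$.
\begin{itemize}
\item Case (1), $a-b\equiv1$: the signs $\delta_i^X$ cancel in pairs, so $Q\Ttau=Q$ and $U\Ttau=\sum x_i\Up_{i,k}-x_{k+1}iI_n=U^*$. These give degrees $0\equiv b-a+1$ and $1\equiv b-a+2$.
\item Case (2), $a-b\equiv7$: $Q\Ttau=-Q$ and $U\Ttau=-U$, giving $KO_2$ and $KO_3$ (the latter in the second form of the table).
\item Case (3), $a-b\equiv5$: $Q\sTtau=Q$ and $U\sTtau=U^*$, giving $KO_4$ and $KO_5$.
\item Case (4), $a-b\equiv3$: $Q\sTtau=-Q$ and $U\sTtau=-U$, giving $KO_6$ and $KO_7=KO_{-1}$ (again the second form).
\end{itemize}
In every case the degrees are $b-a+1$ and $b-a+2$ mod $8$. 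Thus the degree written for $Q$ in Case (3) of the construction should read $b-a+1$.

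\textbf{Reduced classes.} We evaluate at a $\tau$-fixed point such as $e_j$ with $j\in X$. This gives the constant $\Up_{j,k}$, a self-adjoint unitary with the same symmetry. It lies in the same component as a neutral element over $\R$: in degrees where $KO_*(\R)$ is $\Z_2$ or $0$ this is automatic after stabilization, and in $\Z$ degrees the evaluation has trivial complexification since $\widetilde{KU}$ of a point is zero. Alternatively, one can simply subtract $\ev_*$, which changes nothing in the complexified argument below. I expect this step to be the main obstacle. The cleanest route is probably to argue as in Proposition~\ref{S2:1,2}: pick the fixed point so that the value is visibly a neutral element, or note that the constant class lies in the unreduced summand and projects away.

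\textbf{Generation.} $\widetilde K\crr$ of $(S^{k-1},\tau^{a,b})$ is free on a generator in degree $b-a+1$, so by Table~\ref{Table-freemodule} the map $c_{b-a+1}$ is an isomorphism $\Z\to\Z$. The same holds for $(S^k,\tau^{a,b+1})$ in degree $b-a+2$. By Theorem~3.1 of \cite{Boer2020}, $c$ forgets the symmetry, so $c[Q\pr_{a,b}]=[\sum x_i\Up_{i,k}]$ and $c[U\pr_{a,b+1}]=[\sum x_i\Up_{i,k}+ix_{k+1}I_n]$. By Proposition~1 of \cite{SBS}, applied to the Clifford system $\Up_{i,k}$ (which is again a set of $k$-Clifford generators by Lemma~\ref{toggle}), these are generators of $\widetilde{KU}$. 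Hence the real classes generate, giving $g_{a,b}=[Q\pr_{a,b}]$ and $g_{a,b+1}=[U\pr_{a,b+1}]$.
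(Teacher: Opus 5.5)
Your core argument is the paper's: check the four symmetries, then use that $c$ is an isomorphism in the degree of the key generator of a free $\mathcal{CR}$-module, and invoke \cite{SBS} for the complexified classes. Your typo corrections are right: reading $\Gamma_{i,k}$ as $\Up_{i,k}$, placing $Q\pr_{a,b}$ on $(S^{k-1},\tau^{a,b})$ and $U\pr_{a,b+1}$ on $(S^k,\tau^{a,b+1})$, and degree $b-a+1$ for $Q\pr_{a,b}$ in Case (3). The paper's proof never checks that the classes lie in $\ker\mathrm{ev}_*$. You do attempt this, but your claim that vanishing is ``automatic after stabilization'' in the $\Z_2$ degrees is false.

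Here is a counterexample. Take $(a,b)=(2,1)$, which is Case (1) with $k=3$. Lemma~\ref{lemma:modifyCG} gives $X=\{1,3\}$ and $\Up_{i,3}=\Gamma_{i,3}$, so on $(S^3,\tau^{2,2})$ we have $U\pr_{2,2}(x)=\sm{x_3+ix_4}{x_1+ix_2}{x_1-ix_2}{-x_3+ix_4}$. This does satisfy $u\Ttau=u^*$. But at the fixed point $e_3$ it equals $\mathrm{diag}(1,-1)$, a real orthogonal matrix of determinant $-1$, which is the nonzero element of $KO_1(\R)=\Z_2$. The fixed set is a circle, so choosing a different fixed point does not help, and your option of picking the fixed point fails whenever $a\ge 2$. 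Hence $[U\pr_{2,2}]\notin\widetilde{KO}_1(C\pr(S^3,\tau^{2,2}))$, and the literal identity $g_{2,2}=[U\pr_{2,2}]$ is false. This is exactly why Table~\ref{S3unitaryC} uses $x_1=y_1\cdot\mathrm{diag}(1,-1)$ and explicitly checks $\mathrm{ev}_*[x_1]=0$. The same check is needed, and is not automatic, for $Q\pr_{a,b}$ in Case (2), which lands in $KO_2$ where $KO_2(\R)=\Z_2$.

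In the $\Z$ degrees ($Q\pr_{a,b}$ in Cases (1) and (3)) you do get $\mathrm{ev}_*[Q\pr_{a,b}]=0$, but not because ``$\widetilde{KU}$ of a point is zero.'' The correct reason has three parts:
\begin{itemize}
\item naturality gives $c\circ\mathrm{ev}_*=\mathrm{ev}_*\circ c$;
\item the value $\Up_{j,k}$ has trace zero for $k\ge 3$, since it anticommutes with another $\Up_{l,k}$, so it vanishes in $KU_0(\C)$;
\item $c$ is injective on $KO_0(\R)$ and $KO_4(\R)$.
\end{itemize}
In the $\Z_2$ degrees you should commit to your fallback and state what is actually proved. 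Namely, $[Q\pr_{a,b}]-\iota_*\mathrm{ev}_*[Q\pr_{a,b}]$ (and likewise for $U\pr_{a,b+1}$) is the key generator, because $c$ vanishes on $KO_1(\R)$ and $KO_2(\R)$, so your complexification argument goes through unchanged. Alternatively, to get an honest representative in $\ker\mathrm{ev}_*$, modify the unitary by a constant real orthogonal matrix of determinant $-1$. Use right multiplication in the $KO_1$ case and conjugation in the $KO_2$ case; either preserves the symmetry and leaves the complex class unchanged.
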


\begin{proof}
In the first case, $a - b \equiv 1 \pmod 8$,  we can easily check that $Q\pr_{a,b}$ is self-adjoint and also calculate
\begin{align*}
	(Q\pr_{a,b}) \Ttau(x)   
		&= \left( \sum_{i = 1}^k x_i \Gamma_{i,k} \right) \Ttau \\
		&= \sum_{i = 1}^k x_i^\tau \Gamma_{i,k}\T \\
		&= \sum_{i = 1}^k (\delta_i^X)^2 x_i \Gamma_{i,k} \\
		&= (Q\pr_{a,b}) (x)    \; .
\end{align*}
Similarly, check that $(U\pr_{a,b+1}) \Ttau = (U\pr_{a,b+1})^*$.
Thus
$$[Q\pr_{a,b}] \in  \widetilde{KO}_{0}( C\pr(S^{k}, \tau^{a,b})) 
\quad \text{and} \quad 
[U\pr_{a,b+1}] \in  \widetilde{KO}_{1}( C\pr(S^{k+1}, \tau^{a,b+1}))  \; .$$

In the second case, $a - b \equiv 7 \pmod 8$, we calculate
\begin{align*}
	(Q\pr_{a,b}) \Ttau(x)   
		&= \left( \sum_{i = 1}^k x_i \Gamma_{i,k} \right) \Ttau \\
		&= \sum_{i = 1}^k x_i^\tau \Gamma_{i,k}\T \\
		&= \sum_{i = 1}^k (- \delta_i^X)^2 x_i \Gamma_{i,k} \\
		&= - (Q\pr_{a,b}) (x)    
\end{align*}
and similarly we find that $(U\pr_{a,b+1}) \Ttau = -U\pr_{a,b+1}$. 
So
$$[Q\pr_{a,b}] \in  \widetilde{KO}_{2}( C\pr(S^{k}, \tau^{a,b}))  
\quad \text{and} \quad 
[U\pr_{a,b+1}] \in  \widetilde{KO}_{3}( C\pr(S^{k+1}, \tau^{a,b+1}))  \; .$$
(Note that here we are using the second characterization of $KO_3(A)$ from Table~ \ref{unitaryTable}, as represented by unitaries $u$ that satisfy $u\Ttau = -u$.)

In the third case $a - b \equiv 5 \pmod 8$, similar calculations show that 
$$(Q\pr_{a,b}) \sTtau = Q\pr_{a,b} \quad \text{and} \quad (U\pr_{a,b+1}) \sTtau = (U\pr_{a,b+1})^* \;  $$
so 
$$[Q\pr_{a,b}] \in  \widetilde{KO}_{4}( C\pr(S^{k}, \tau^{a,b}))  
\quad \text{and} \quad 
[U\pr_{a,b+1}] \in  \widetilde{KO}_{5}( C\pr(S^{k+1}, \tau^{a,b+1})) \; .$$

And in the fourth case, $a - b \equiv 3 \pmod 8$ we have
$$(Q\pr_{a,b}) \sTtau = -Q\pr_{a,b} \quad \text{and} \quad (U\pr_{a,b+1}) \sTtau = -U\pr_{a,b+1}\;  $$
so 
$$[Q\pr_{a,b}] \in  \widetilde{KO}_{6}( C(S^{k}, \tau^{a,b}))  
\quad \text{and} \quad 
[U\pr_{a,b+1}] \in  \widetilde{KO}_{7}( C(S^{k+1}, \tau^{a,b+1})) \; .$$
(Again, we are using the second characterization of $KO_{-1}(A)$ from Table~ \ref{unitaryTable}, as represented by unitaries $u$ that satisfy $u\sTtau = -u$.)

Therefore in all cases, we have 
$$[Q\pr_{a,b}] \in \widetilde{KO}_{b-a+1}(C(S^{k}, \tau^{a,b})) \quad \text{and} \quad 
[U\pr_{a,b+1}] \in \widetilde{KO}_{b-a+2}(C(S^{k+1}, \tau^{a,b+1})) .$$

Then it follows using the same argument as in the last paragraph of the proof of Theorem~\ref{main-thmI} that 
$$g_{a,b} = [Q\pr_{a,b}] \quad \text{and} \quad g_{a, b+1} = [U\pr_{a,b+1}] \; .$$
\end{proof}

\vspace{2cm}

\bibliographystyle{amsplain}
\bibliography{sphere}

@article{boersema-spheres1,
	author = {Jeffrey L. Boersema},
	doi = {https://doi.org/10.1016/j.exmath.2025.125734},
	issn = {0723-0869},
	journal = {Expositiones Mathematicae},
	number = {2},
	title = {The real {$K$}-theory of the sphere with the antipodal involution},
	url = {https://www.sciencedirect.com/science/article/pii/S0723086925000891},
	volume = {44},
	year = {2026}}

@book{BoersemaSchochet,
	author = {Boersema, Jeffrey and Schochet, Claude L.},
	publisher = {Springer, Heidelberg},
	series = {Lecture Notes in Mathematics},
	title = {Real {$K$}-Theory for {$C \sp *$}-Algebras: Just the Facts},
	year = {Forthcoming}}

@article{AtiyahR,
	author = {Atiyah, M. F.},
	doi = {10.1093/qmath/17.1.367},
	fjournal = {The Quarterly Journal of Mathematics. Oxford. Second Series},
	issn = {0033-5606,1464-3847},
	journal = {Quart. J. Math. Oxford Ser. (2)},
	mrclass = {55.30 (57.30)},
	mrnumber = {206940},
	mrreviewer = {J.\ F.\ Adams},
	pages = {367--386},
	title = {{$K$}-theory and reality},
	url = {https://doi.org/10.1093/qmath/17.1.367},
	volume = {17},
	year = {1966}}

@article{Boer2020,
	author = {Boersema, Jeffrey L.},
	fjournal = {Houston Journal of Mathematics},
	issn = {0362-1588},
	journal = {Houston J. Math.},
	mrclass = {46L80 (19K99 46L60 46L85 81R10)},
	mrnumber = {4137279},
	mrreviewer = {Michael Frank},
	number = {1},
	pages = {71--111},
	title = {{$K$}-theory for real {$C^*$}-algebras via unitary elements with symmetries, {P}art {II}---{N}atural transformations and {$KO_*(\Bbb R)$}-module operations},
	volume = {46},
	year = {2020}}

@article{Boer2002,
	author = {Boersema, Jeffrey L.},
	coden = {KTHEEO},
	fjournal = {$K$-Theory. An Interdisciplinary Journal for the Development, Application, and Influence of $K$-Theory in the Mathematical Sciences},
	issn = {0920-3036},
	journal = {$K$-Theory},
	mrclass = {46L05 (19K35 46L80 46M18)},
	mrnumber = {1935138},
	mrreviewer = {Judith A. Packer},
	number = {4},
	pages = {345--402},
	title = {Real {$C\sp *$}-algebras, united {$K$}-theory, and the {K}\"unneth formula},
	volume = {26},
	year = {2002}}

@article{BL,
	author = {Boersema, Jeffrey L. and Loring, Terry A.},
	fjournal = {New York Journal of Mathematics},
	journal = {New York J. Math.},
	pages = {1139--1220},
	title = {{$K$}-theory for real {$C^*$}-algebras via unitary elements with symmetries},
	volume = {22},
	year = {2016}}

@article{Li...2025,
	author = {Ki Young Lee and Stephan Wong and Sachin Vaidya and Terry Loring and Alexander Cerjan},
	journal = {Phys. Rev. Research},
	title = {Classification of fragile topology enabled by matrix homotopy},
	volume = {7},
	year = {2025}}

@article{SBS,
	author = {Schulz-Baldes, Hermann and Stoiber, Tom},
	doi = {10.1016/j.exmath.2023.125519},
	fjournal = {Expositiones Mathematicae},
	issn = {0723-0869},
	journal = {Expo. Math.},
	mrclass = {19E99 (19D55)},
	mrnumber = {4667589},
	mrreviewer = {Jonathan M. Rosenberg},
	number = {4},
	pages = {Paper No. 125519, 13},
	title = {The generators of the {$K$}-groups of the sphere},
	url = {https://doi.org/10.1016/j.exmath.2023.125519},
	volume = {41},
	year = {2023}}

@article{CL-2022,
	author = {Alexander Cerjan and Terry A. Loring},
	journal = {Phys. Rev. B},
	month = {August},
	title = {Local invariants identify topology in metals and gapless systems},
	year = {2022}}

@article{CLSB-2024,
	author = {Alexander Cerjan and Terry A Loring and Hermann Schulz-Baldes.},
	journal = {Phys. Rev. Lett.},
	title = {Local Markers for crystalline topology},
	volume = {132},
	year = {2024}}

@article{DLC-2023,
	author = {Kahlil Y. Dixon and Terry A. Loring and Alexander Cerjan},
	journal = {Phys. Rev. Lett.},
	month = {November},
	title = {Classifying Topology in PHotonic heterostructures with Gapless Environments},
	volume = {131},
	year = {2023}}

@article{CL2024,
	author = {Alexander Cerjan and Terry A Loring},
	journal = {APL Photonics},
	month = {November},
	title = {Classying photonic topology using the spectral localizer and numerical {$K$}-theory},
	volume = {9},
	year = {2024}}

@article{Loring2015,
	author = {Terry A. Loring},
	journal = {Annals of Physics},
	month = {May},
	pages = {383--416},
	title = {{$K$}-theory and pseudospectra for topological insulators},
	volume = {356},
	year = {2015}}

@article{bradlyn,
	author = {B. Bradlyn et al.},
	journal = {Nature},
	pages = {298--305},
	title = {Topological quantum chemistry},
	volume = {547},
	year = {2017}}

@article{bousfield90,
	author = {Bousfield, A. K.},
	coden = {JPAAA2},
	fjournal = {Journal of Pure and Applied Algebra},
	issn = {0022-4049},
	journal = {J. Pure Appl. Algebra},
	mrclass = {55N15 (19L99 55P42)},
	mrnumber = {1075335 (92d:55003)},
	mrreviewer = {Andrei V. Pazhitnov},
	number = {2},
	pages = {121--163},
	title = {A classification of {$K$}-local spectra},
	volume = {66},
	year = {1990}}

@article{HL,
	author = {Hastings, Matthew B. and Loring, Terry A.},
	coden = {ADNYA6},
	doi = {10.1016/j.aop.2010.12.013},
	fjournal = {Annals of Physics},
	issn = {0003-4916},
	journal = {Ann. Physics},
	mrclass = {82B10 (19K99 46L05 81R15)},
	mrnumber = {2806137 (2012j:82005)},
	mrreviewer = {Albert Jeu-Liang Sheu},
	number = {7},
	pages = {1699--1759},
	title = {Topological insulators and {$C^*$}-algebras: theory and numerical practice},
	url = {http://dx.doi.org/10.1016/j.aop.2010.12.013},
	volume = {326},
	year = {2011}}

\end{document}